\documentclass[12pt, reqno]{amsart}

\usepackage{amssymb, amsrefs}
\usepackage{hyperref}

\usepackage{setspace}
\onehalfspacing

\theoremstyle{plain}
\newtheorem{thm}{Theorem}[section]
\newtheorem{prp}[thm]{Proposition}
\newtheorem{cor}[thm]{Corollary}
\newtheorem{lem}[thm]{Lemma}

\newcommand{\U}{\mathcal{U}}
\newcommand{\R}{\mathbb{R}}
\newcommand{\sph}{\mathbb{S}}
\newcommand{\h}{\hbar}
\newcommand{\bigO}{\mathcal{O}}
\newcommand{\cZ}{\mathcal{Z}}
\newcommand{\bigT}{\Theta}
\newcommand{\inner}[2]{ \left\langle #1,#2 \right\rangle}
\newcommand{\e}{\varepsilon}
\newcommand{\Op}{\mathrm{Op}_\h^w}
\newcommand{\Id}{\mathrm{Id}}
\newcommand{\supp}{\mathrm{supp \,}}
\newcommand{\ess}{\mathrm{ess\,supp \,}}
\newcommand{\vol}{\operatorname{Vol}}
\newcommand{\bra}{ \left\langle}
\newcommand{\ket}{\right\rangle}
\newcommand{\diam}{\operatorname{diam}}

\title[Superscarred quasimodes on flat surfaces]{Superscarred quasimodes on flat surfaces with conical singularities}

\date\today

\begin{document}

\begin{abstract}
We construct a continuous family of quasimodes for the Laplace-Beltrami operator on a translation surface. We apply our result to rational polygonal quantum billiards and thus construct a continuous family of quasimodes for the Neumann Laplacian on such domains with spectral width $\bigO_\e(\lambda^{3/8+\e})$. We show that the semiclassical measures associated with this family of quasimodes project to a finite sum of Dirac measures on momentum space, hence, they satisfy Bogomolny and Schmit's superscar conjecture for rational polygons.
\end{abstract}

\author{Omer Friedland}
\address{Institut de Math\'ematiques de Jussieu - Paris Rive Gauche, Sorbonne Universit\'e - Campus Pierre et Marie Curie, 4 place Jussieu, 75252 Paris, France.}
\email{omer.friedland@imj-prg.fr}

\author{Henrik Uebersch\"ar}
\address{Institut de Math\'ematiques de Jussieu - Paris Rive Gauche, Sorbonne Universit\'e - Campus Pierre et Marie Curie, 4 place Jussieu, 75252 Paris, France.}
\email{henrik.ueberschar@imj-prg.fr}

\thanks{H. U. was supported by the grant ANR-17-CE40-0011-01 of the French National
Research Agency ANR (project SpInQS)}

\maketitle

\section{Introduction}

In this paper we study polygonal quantum billiards. It is well-known \cite{T05,ZK75,Z06} that the classical billiard flow on polygons whose angles are rational multiples of $\pi$ can be lifted to the geodesic flow on a flat Riemann surface with a finite number of conical singularities. Whereas the classical dynamics of such billiards has been studied extensively, very little is known at the rigorous level about the quantum dynamics. 

The quantization of the classical billiard flow on rational polygons leads one to study eigenfunctions $\psi_\lambda$ of the Laplace-Beltrami operator on such polygons. For instance, we could consider the eigenvalue problem with Neumann boundary conditions
$$
(\Delta+\lambda)\psi_\lambda=0, \quad \partial_n\psi_\lambda|_{\partial P}=0
$$
where we denote $\Delta=\partial_x^2+\partial_y^2$ in local Euclidean coordinates and $\partial_n$ denotes the outward normal derivative at the boundary $\partial P$.

In fact such quantum billiards have attracted much attention in the physics literature. They are one of the important examples of the class of pseudo-integrable systems. Such systems are close to integrable from a classical point of view in the sense that a full measure subset of trajectories in phase space will never encounter any conical singularities. Nonetheless such systems display features typical of classically chaotic systems at the quantum level which is due to diffraction of wave packets at the conical singularities.

Bogomolny and Schmit \cite{BS04} have predicted that the Laplace eigenfunctions of rational polygons become localized along a finite number of directions in momentum space in the semiclassical limit, as the eigenvalue tends to infinity – a phenomenon that they dubbed ``superscars''. This observation agrees with the expectation that features of the classical dynamics should emerge in the semiclassical limit: a classical trajectory in a rational billiard can only attain a finite number of directions in momentum space. 

Consider a rational polygon which we denote by $P$. In order to test the localization properties of the Laplace eigenfunctions, we recall that one may associate with an eigenfunction $\psi_\lambda$, a Wigner distribution $d\mu_{\psi_\lambda}$ on the phase space $TP$. We are interested in the semiclassical measures associated with this sequence – the limit points of the sequence of Wigner distributions in the weak-* topology for a suitable space of test functions. 

Marklof and Rudnick \cite{MR12} have investigated the projections of these semiclassical measures on configuration space. They found that a full density subsequence of eigenfunctions equidistributes in configuration space, so the only limit measure along this subsequence is normalized Lebesgue measure. However, very little is known about the projection on momentum space. Based on the work of Bogomolny and Schmit, we expect the projections on momentum space to be a finite sum of Dirac measures on $\sph$. 

In this article we make a first step towards proving the Bogomolny-Schmit superscar conjecture by constructing a continuous family of approximate eigenfunctions, frequently referred to as quasimodes, and we show that the semiclassical measures associated with this family of quasimodes are exactly of the form predicted by Bogomolny and Schmit.

The idea to construct the quasimodes follows a method that has recently been used by Nonnenmacher and Eswarathasan \cite{EN17} in the context of hyperbolic systems. We start with a minimal uncertainty coherent state $\phi_0$ which is localized in configuration and momentum space. As we propagate this localized state with the time-evolution operator $\U_t=e^{it\Delta}$ almost all of the mass of the propagated state $\Phi_\lambda=\U_t\phi_0$ will remain close, up to a certain time-scale, to the classical trajectories that correspond to the subset of initial conditions in $TP$ on which the state $\psi_0$ is localized. 
 
Our quasimode is constructed by averaging the evolved state $\Phi_\lambda$ over a time interval $[0,T]$. One can show that this method gives a quasimode of spectral width $O(1/T)$. In our case the maximal permissible time-scale is limited by the necessity to stay away from conical singularities on the associated flat surface. This yields a spectral width of order $\bigO_\e(\lambda^{3/8+\e})$. This is in stark contrast with the work of Nonnenmacher and Eswarathasan who can only get a spectral width of order $O(\lambda^{1/2}/\log\lambda)$, which is of course due to the fact that we are dealing with pseudo-integrable dynamics. We suspect that the present method cannot improve the spectral width much further by going to time-scales where the wave packet is scattered at the conical singularities, as one enters a new regime whose dynamics is much more complex and similar in character to hyperbolic systems. So a further improvement of the time-scale should only be logarithmic in $\lambda$.

Let us now present the main results of this paper. We consider the Laplace-Beltrami operator $\Delta = \partial_x^2 + \partial_y^2$ in local Euclidean coordinates, on the translation surface $Q$, a compact Riemannian manifold with periodic boundary. The pseudo-differential calculus permits to associate with each $L^2$-normalized function $\psi$, a Wigner distribution $d\mu_\psi(x,\xi)$ on the co-tangent bundle $TQ$, which for any classical observable $a\in C^\infty_c(TQ)$ is defined by the duality (see Section \ref{sec-quasi-trans} for a detailed explanation), that is
$$
\bra \Op(a)\psi,\psi\ket_Q = \int_{TQ}a(x,\xi)d\mu_\psi(x,\xi), 
$$
where $\Op(a)$ denotes the standard Weyl quantization. We denote the restriction of the Wigner distribution to momentum space by 
$$
d\mu_\psi(\xi) = \int_Q d\mu_\psi(x,\xi). 
$$

Our goal is to construct quasimodes $\Lambda_\lambda\in C^2(Q)$ for the Laplacian on $Q$, for which the $L^2$-error is controlled in the following manner
\begin{align} \label{eq-spectral-width}
\frac{\|(\Delta + \lambda)\Lambda_\lambda\|_{L^2(Q)}}{\|\Lambda_\lambda\|_{L^2(Q)}} = \bigO(\lambda^\delta) 
\end{align}
for $\delta<1/2$. We say that $\Lambda_\lambda$ is a quasimode with quasienergy $\lambda$ and spectral width $\lambda^\delta$ for the Laplacian on $Q$.

The following theorem shows that there exists a continuous family of quasimodes which satisfy the above conjecture for any translation surface.

\begin{thm} \label{thm-Q}
Let $\xi_0\in\sph$ and let $\e>0$. There exists a continuous family of quasimodes $\{\Lambda_{\lambda}\}_{\lambda>0}$ for the Laplacian on $Q$ of spectral width $\bigO(\lambda^{3/8 + \e})$ so that 
$$
d\mu_{\Lambda_{\lambda}}(\xi) \xrightarrow{w*} \delta(\xi-\xi_0) \,, \quad \lambda\to\infty.
$$
\end{thm}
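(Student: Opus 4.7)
The plan is to follow the coherent state plus time-averaging method sketched in the introduction. For the given $\xi_0 \in \sph$, I would first choose a base point $x_0 \in Q$ whose straight-line orbit $\gamma(t) = x_0 + 2\sqrt\lambda\,t\,\xi_0$, lifted to the universal cover of the flat part of $Q$, stays at a safe distance from the conical singularities for $t \in [0,T]$. Existence of such an $x_0$ comes from a volumetric argument on $Q$: there are only finitely many singularities, and the $d$-tube around the orbit in $Q$ has area at most $O(dT\sqrt\lambda)$. Next, I introduce the minimal-uncertainty Gaussian coherent state
\[
\phi_0(x) = (2\pi\sigma^2)^{-1/2}\exp\!\left(-\frac{|x-x_0|^2}{4\sigma^2}\right)\,e^{i\sqrt\lambda\,\xi_0\cdot x},
\]
with position width $\sigma = \sigma(\lambda)$ to be optimized, realized as a function on $Q$ through a flat chart around $x_0$ (Gaussian tails outside the chart contribute $\bigO(\lambda^{-\infty})$). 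The quasimode is then
\[
\Lambda_\lambda = \int_\R \chi(t/T)\,e^{i\lambda t}\,\U_t\phi_0\,dt,
\]
with $\chi \in C_c^\infty(\R)$, $\supp\chi \subset [0,1]$, $\int\chi = 1$; the factor $e^{i\lambda t}$ selects the frequency-$\lambda$ component of $\phi_0$.

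For the numerator, I use $\Delta\U_t = -i\partial_t\U_t$ and integrate by parts in $t$ to obtain
\[
(\Delta+\lambda)\Lambda_\lambda = \frac{i}{T}\int_\R \chi'(t/T)\,e^{i\lambda t}\,\U_t\phi_0\,dt,
\]
so $\|(\Delta+\lambda)\Lambda_\lambda\|_{L^2(Q)} = \bigO(1)$ by Minkowski and unitarity. For the denominator, $\|\Lambda_\lambda\|^2$ expands as a double integral in $(t,s)$ and, after substitution $\tau = t-s$, reduces to the autocorrelation $e^{i\lambda\tau}\inner{\U_\tau\phi_0}{\phi_0}_Q$. The crucial step is to approximate $\U_\tau\phi_0$ on $Q$ by the free Schr\"odinger evolution on $\R^2$, lifted from the universal cover; this is legitimate as long as the $\sigma_\tau$-neighborhood of $\gamma([0,\tau])$, with $\sigma_\tau = \sqrt{\sigma^2 + \tau^2/\sigma^2}$ the dispersed width, avoids the lifts of the singularities, and otherwise the exponential Gaussian tail renders the error negligible. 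The explicit Gaussian computation yields
\[
e^{i\lambda\tau}\inner{\U_\tau\phi_0}{\phi_0}_{\R^2} = \frac{2\sigma^2}{2\sigma^2+i\tau}\exp\!\left(-\frac{\tau^2\lambda}{2\sigma^2+i\tau}\right),
\]
concentrated on $|\tau| \lesssim \sigma/\sqrt\lambda$, hence $\|\Lambda_\lambda\|_{L^2(Q)}^2 \sim T\sigma/\sqrt\lambda$. The spectral width is therefore of order $\lambda^{1/4}/\sqrt{T\sigma}$, and a careful balance between maximizing $T\sigma$ and respecting the singularity-avoidance constraint on the dispersing beam produces the claimed $\lambda^{3/8+\e}$.

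For the momentum localization, compute $\inner{\Op(a)\Lambda_\lambda}{\Lambda_\lambda}_Q$ with $a = a(\xi)$ independent of position. Since $\Op(a)$ is then a Fourier multiplier, it commutes with $\U_t$, and after inserting the free-evolution approximation one obtains, up to $\|\Lambda_\lambda\|^2$ normalization and controlled errors, a weighted integral against $|\hat\phi_0(\xi)|^2$, which is a Gaussian of width $1/\sigma$ centered at $\sqrt\lambda\,\xi_0$. Since $\sigma\sqrt\lambda \to \infty$, rescaling momenta to $\sph$ kills the angular spread and the limit measure is $\delta(\xi-\xi_0)$. Continuity in $\lambda$ follows from a continuous choice of the parameters $(x_0,\sigma,T)$, since the geometric constraint is an open condition and small perturbations preserve it. The principal obstacle is the rigorous control of $\U_\tau\phi_0$ on the singular surface by the free evolution on the universal cover throughout $[0,T]$: one must quantify how near-miss encounters with conical singularities affect the Gaussian beam approximation, and it is precisely this geometric trade-off between packet width, propagation length, and singularity density that pins down the exponent $3/8$.
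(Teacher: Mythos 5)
Your construction (Gaussian coherent state at $(x_0,\xi_0)$, time-average against $\chi(t/T)e^{i\lambda t}$, approximate the flow on $Q$ by the free Euclidean evolution and control everything by Gaussian tails, then read off momentum localization from $|\widehat\phi_0|^2$) matches the paper's in spirit and in most of its ingredients. However, there is one gap that is fatal as written: the bound on the numerator.

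You estimate $\|(\Delta+\lambda)\Lambda_\lambda\|_{L^2(Q)}=\bigO(1)$ by Minkowski and unitarity, and then claim the spectral width is $\lambda^{1/4}/\sqrt{T\sigma}$, to be optimized. But the Minkowski bound is off by a large power of $\lambda$, because the integrand $e^{i\lambda t}\U_t\phi_0$ decorrelates on the very short time scale $\sim \sigma/\sqrt{\lambda}$ rather than behaving like $T$ unit-norm pieces that could add up coherently. With the admissible parameters the geometry forces (roughly $\sigma\sim\lambda^{-1/4}$, $T\lesssim\lambda^{-3/8}$, exactly what the cylinder constraint yields), your formula gives spectral width of order $\lambda^{1/4}/\sqrt{T\sigma}\sim\lambda^{9/16}$, which exceeds $\lambda^{1/2}$ and is therefore not a quasimode at all. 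There is no ``careful balance'' that rescues $\lambda^{3/8+\e}$ from the $\bigO(1)$ numerator bound.

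The fix is exactly the point of the paper's Lemma \ref{lem-L2-int} and the surrounding computation: after integration by parts,
\[
(\Delta+\lambda)\Lambda_\lambda=\frac{i}{T}\int_\R \chi'(t/T)\,e^{i\lambda t}\,\U_t\phi_0\,dt
\]
is structurally the \emph{same} object as $\Lambda_\lambda$ with $\chi$ replaced by $\chi'$ and an overall factor $1/T$. Its squared $L^2$-norm therefore reduces to the same autocorrelation integral $\int g(v)\,\langle\varphi_0,e^{iv(\Delta+\lambda)}\varphi_0\rangle\,dv$, only with a kernel $g(v)=T^{-1}\widetilde g(v/T)$ in place of $T\,\widetilde g(v/T)$. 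Since this autocorrelation is supported on $|v|\lesssim\sigma/\sqrt\lambda\ll T$ and its value at $v=0$ dominates, one gets $\|(\Delta+\lambda)\Lambda_\lambda\|^2\asymp T^{-2}\|\Lambda_\lambda\|^2$, hence spectral width $\bigO(1/T)=\bigO(\lambda^{3/8+\e})$. You need to carry out this refined computation, not the crude triangle-inequality bound.

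Two secondary remarks. First, the paper pins down the time scale $T\sim\hbar^{3/4+\e}$ not by a tube-volume argument but by working inside an embedded metric cylinder $\cZ_{\xi_0}$ in direction $\xi_0$ of length $L$ and width $\asymp 1/L$; the competing requirements ``dispersed beam width $\lesssim 1/L$'' and ``travel distance $\lesssim L$'' give exactly $|v|\hbar^{-1/2-\e}\lesssim\hbar/|v|$ and hence $T\sim\hbar^{3/4}$. Your volumetric argument is vaguer and, more importantly, your orbit must stay within a single Euclidean chart of $Q$ (i.e.\ inside one cylinder) for the exact Egorov/free-evolution transfer you invoke; a generic straight line that merely misses the conical points by a fixed distance will wrap around $Q$ and self-intersect, which breaks the identification of $\U_t^Q\phi_0$ with the lifted Euclidean evolution. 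Second, your $\sigma$ is left undetermined; the paper takes the minimal-uncertainty scaling $\sigma\sim\hbar^{1/2}$ with an enlarged cutoff of radius $\hbar^{1/2-\e}$, and both the momentum localization and the cylinder constraint depend on making this choice explicit.
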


As a corollary of this theorem one may construct a family of quasimodes for the Neumann Laplacian on a rational polygon $P$. Indeed, any rational polygon $P$ may be unfolded to a translation surface $Q$ under the action of the dihedral group $D$ of $P$ (see e.g. \cite[Section~1.5]{MT02}). Given a quasimode $\Lambda_\lambda$ on $Q$, we may construct a quasimode on $P$ by the method of images, 
$$
\Psi_\lambda(x) = \sum_{g\in D}\Lambda_\lambda(gx).
$$
In other words, given a rational polygon $P$, we lift the problem to an associated translation surface $Q$, apply Theorem \ref{thm-Q}, and then we fold back to the rational polygon $P$, which yields the following result.

\begin{cor} \label{cor-P}
Let $\xi_0\in\sph$ and $\e>0$. There exists a continuous family of quasimodes $\{\Psi_\lambda\}_{\lambda>0}$ for the Neumann Laplacian on $P$ of spectral width $\bigO(\lambda^{3/8 + \e})$ so that 
$$
d\mu_{\Psi_\lambda}(\xi) \xrightarrow{w*} \frac{1}{|D|}\sum_{g\in D}\delta(\xi-g\xi_0) \,, \quad \lambda\to\infty, 
$$
where $D$ is the dihedral group of $P$, and $g$ is an element in this group.
\end{cor}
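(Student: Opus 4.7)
My plan is to realize $\Psi_\lambda$ as the dihedral average of the translation-surface quasimode supplied by Theorem \ref{thm-Q}. Given $\xi_0 \in \sph$ and $\e > 0$, apply that theorem to produce a continuous family $\{\Lambda_\lambda\}$ on $Q$ of spectral width $\bigO(\lambda^{3/8+\e})$ with $d\mu_{\Lambda_\lambda}(\xi) \to \delta(\xi - \xi_0)$, and set
$$
\Psi_\lambda(x) = \sum_{g \in D} \Lambda_\lambda(gx), \qquad x \in P.
$$
Since each $g$ is an isometry and $\Lambda_\lambda \in C^2(Q)$, we get $\Psi_\lambda \in C^2(P)$ with continuous dependence on $\lambda$. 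The sum is invariant under the $D$-action on its indices, so $\Psi_\lambda$ is symmetric under each reflection in $D$; in particular its outward normal derivative vanishes on $\partial P$, delivering the Neumann condition.

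To transfer the spectral-width bound from $Q$ to $P$, note that $\tilde\Psi_\lambda := \sum_g \Lambda_\lambda \circ g$ is the $D$-invariant extension of $\Psi_\lambda$ to all of $Q$. Because $Q$ is tiled by the $|D|$ copies $\{gP\}$ up to a null set and $\Delta$ commutes with each isometry $g$, every $D$-invariant function $F$ on $Q$ satisfies $\|F\|_{L^2(Q)}^2 = |D|\,\|F|_P\|_{L^2(P)}^2$, and likewise with $(\Delta+\lambda)$ inserted. Hence the spectral-width ratio on $P$ equals the analogous ratio on $Q$. The numerator is at most $|D|\,\|(\Delta+\lambda)\Lambda_\lambda\|_{L^2(Q)}$ by the triangle inequality. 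For the denominator, expand $\|\tilde\Psi_\lambda\|_{L^2(Q)}^2$ as a double sum: the diagonal contributes $|D|\,\|\Lambda_\lambda\|_{L^2(Q)}^2$, while each off-diagonal term reduces, via an isometry change of variables, to $\bra \Lambda_\lambda, \Lambda_\lambda \circ h\ket_Q$ for some $h \in D \setminus \{\operatorname{id}\}$. Using $\widehat{f\circ h}(\xi) = \hat f(h\xi)$ for orthogonal $h$, the momentum localization of $\Lambda_\lambda$ at $\xi_0$ forces these cross terms to be $o(\|\Lambda_\lambda\|_{L^2(Q)}^2)$ whenever $h\xi_0 \neq \xi_0$, so $\|\tilde\Psi_\lambda\|_{L^2(Q)}^2 \gtrsim |D|\,\|\Lambda_\lambda\|_{L^2(Q)}^2$ and the spectral width $\bigO(\lambda^{3/8+\e})$ is preserved.

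For the momentum measure, the same transfer reduces $\bra \Op(a)\Psi_\lambda,\Psi_\lambda\ket_P$ to $\bra \Op(a)\tilde\Psi_\lambda,\tilde\Psi_\lambda\ket_Q/|D|$ for any $a=a(\xi)$. Expanding and using the isometry covariance of Weyl quantization, the diagonal part gives $\sum_g \int a(g\xi)\,d\mu_{\Lambda_\lambda}(\xi) \to \sum_g a(g\xi_0)$ by Theorem \ref{thm-Q}, while the off-diagonal part vanishes by the same momentum-disjointness argument. Dividing by $\|\tilde\Psi_\lambda\|_{L^2(Q)}^2 \sim |D|\,\|\Lambda_\lambda\|_{L^2(Q)}^2$ yields the claimed weak-$*$ limit. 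The principal technical obstacle is a rigorous justification of $\bra \Lambda_\lambda,\Lambda_\lambda \circ h\ket_Q = o(\|\Lambda_\lambda\|_{L^2(Q)}^2)$ when $h\xi_0 \neq \xi_0$; this should follow from the explicit construction of $\Lambda_\lambda$ as a time-averaged coherent state with momentum spread $O(\h) = O(\lambda^{-1/2})$, placing the Fourier supports about $\xi_0$ and $h^{-1}\xi_0$ at positive distance in the limit. The case when $\xi_0$ lies on a reflection axis of $D$ is handled automatically: the orbit $D\xi_0$ has fewer than $|D|$ points, the formula $\frac{1}{|D|}\sum_g \delta(\xi - g\xi_0)$ still records the correct multiplicities, and the additional cross terms now add constructively to reinforce the coincident delta masses.
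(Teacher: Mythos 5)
Your construction $\Psi_\lambda(x)=\sum_{g\in D}\Lambda_\lambda(gx)$ and the overall diagonal/off-diagonal decomposition are the same as the paper's, and your setup via the $D$-invariant extension $\tilde\Psi_\lambda$ is a clean way to organize the transfer between $P$ and $Q$. A minor difference: for the numerator you settle for the triangle-inequality bound $\|(\Delta+\lambda)\Psi_\lambda\|_{L^2(P)}\le |D|\,\|(\Delta+\lambda)\Lambda_\lambda\|_{L^2(Q)}$, while the paper asserts an asymptotic equality with $\bigO(\h^\infty)$ cross terms. Your cruder bound still gives the claimed $\bigO(\lambda^{3/8+\e})$ once the denominator is controlled, so this is fine. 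You also correctly identify the key technical point, namely that the off-diagonal terms $\bra\Lambda_\lambda,\Lambda_\lambda\circ h\ket_Q$ for $h\ne\Id$ must be shown negligible; the paper asserts this without detail, and your Fourier-support sketch (disjoint essential supports in momentum when $h\xi_0\ne\xi_0$) is the right mechanism and matches what the paper's estimates supply.

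However, your treatment of the stabilizer case $h\xi_0=\xi_0$ (i.e.\ $\xi_0$ on a reflection axis of $D$) is not correct. You claim the resulting cross terms ``add constructively to reinforce the coincident delta masses,'' but if $\bra\Lambda_\lambda,\Lambda_\lambda\circ h\ket_Q$ were genuinely of size $\|\Lambda_\lambda\|_{L^2(Q)}^2$ then \emph{both} the normalization $\|\Psi_\lambda\|_{L^2(P)}^2$ and the matrix elements $\bra\Op(a)\Psi_\lambda,\Psi_\lambda\ket_P$ would pick up extra terms, and nothing you have written shows that their ratio still converges to $\frac{1}{|D|}\sum_g a(g\xi_0)$; a priori the weights at fixed directions could be wrong, or the interference could even be destructive depending on phases. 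The correct resolution is that these cross terms \emph{also} vanish: even when $h\xi_0=\xi_0$, the two wave packets $\Lambda_\lambda$ and $\Lambda_\lambda\circ h$ occupy tubes of transverse width $\asymp\h^{1/2-\e}$ passing through $x_0$ and $h^{-1}x_0$ respectively, and for a generic choice of base point $x_0$ (which the construction is free to make) these two parallel tubes inside the cylinder $\cZ_{\xi_0}$ are disjoint once $\h$ is small, so the inner product is again $\bigO(\h^\infty)$. With that replacement your argument goes through and matches the paper's; as written, though, the stabilizer paragraph is a genuine gap rather than a routine remark.
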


This paper is organized as follows. First, we give a short description of translation surfaces and the main properties which we use (this is done in Section \ref{sec-trans}). Then, we construct a Gaussian quasimode on the Euclidean plane. It is done by introducing a Gaussian initial state on the Euclidean plane, and then averaging its evolved state over time. We also compute explicitly its spectral width in terms of special functions (see Section \ref{sec-quasi-euclid}). For a translation surface $Q$ we modify the above construction by taking a cutoff of the Gaussian initial state and averaging its evolved state over a time-window, which depends on certain dynamical assumptions. Using Egorov's theorem we calculate its spectral width and actually we show that it is comparable to the Euclidean one, up to an error which we control (see Section \ref{sec-quasi-trans}). Finally, in Section \ref{sec-proofs} we prove Theorem \ref{thm-Q} and Corollary \ref{cor-P}.

We use the following convention for the Fourier transform and its for a function $f \in L^1(\R^2)$.
\begin{align*}
\widehat f(k) = \frac{1}{{2\pi}} \int_{\R^2} f(x) e^{-ik\cdot x} dx \,, \quad f(x) = \frac{1}{{2\pi}} \int_{\R^2} \widehat f(k) e^{ik\cdot x} dk.
\end{align*}

\section{Brief introduction to translation surfaces} \label{sec-trans}

In this section we briefly discuss the notion of translation surfaces and explain the main properties that we shall use. Let us start with the definition. Let $\mathcal{P} = \{P_1,\dots,P_n\}$ be a finite collection of polygons (not necessarily convex nor rational) in the Euclidean plane. A translation surface is the space obtained by edge identification. First, assume that the boundary of each polygon $P_i$ is oriented so that the polygon lies (say) to the left. If $\{s_1,\dots,s_m\}$ is the collection of all edges in $\mathcal P$, then for any $s_i$ there exists $s_j$ so that they are parallel, of the same length and of opposite orientation. In other words, there exists a nonzero translation vector $\tau_i$ so that 
\begin{align} \label{eq-trans-vec}
s_j = s_i + \tau_i,
\end{align}
and hence $\tau_j = -\tau_i$. Now, consider the space obtained by identifying of all $s_i$ with their corresponding $s_j$ through the map $x\mapsto x + \tau_i$, that is, they are ``glued'' together by a parallel translation.

It is well-known that the billiard flow on a rational polygon $P$ may be lifted to the geodesic flow on an associated translation surface $Q$, which is a flat surface with finitely many conical singularities (this is a classical construction, see e.g. \cite{MT02, T05, ZK75}). Consider now a straight line flow on a translation surface $Q$ starting at $x_0\in Q$ in a direction $\xi_0\in\sph^1$. This flow is obtained by starting at $x_0$ and moving in the direction $\xi_0$ at unit speed for time $t$. Once it hits an edge $s_i$ at a point $x$, it continues at $x + \tau_i$ in the same direction $\xi_0$. It is a parametrized curve, which outside of the singular points, is locally the image of a straight line in the Euclidean plane parametrized by arclength. Thus, the flow on $Q$ is nothing but a bunch of parallel closed intervals in direction $\xi_0$. If a flow arrives at a singularity it is required to stop there. How far can it go? It depends on $t$, as long as it does not meet any singular point. A more precise answer is given by the following result of Zemljakov-Katok \cite{ZK75}.

\begin{prp}
For any given time $t$ there exists a direction $\xi_0\in\sph^1$ so that the flow starting at $x_0\in Q$ at direction $\xi_0$ shall not meet any singular point up to time $t$.
\end{prp}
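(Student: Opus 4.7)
The plan is to show that the set of directions $\xi \in \sph^1$ for which the flow from $x_0$ meets a singularity before time $t$ is finite, so that any direction in its (non-empty, in fact co-finite) complement witnesses the conclusion.

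First I would use the flat structure to lift the problem to the Euclidean plane. Let $\Sigma = \{\sigma_1, \ldots, \sigma_N\}$ denote the finite set of conical singularities of $Q$. The translation surface admits a developing map $\mathrm{dev}$ from (the universal cover of) $Q \setminus \Sigma$ to $\R^2$, which is a local isometry and sends straight-line trajectories to Euclidean segments. Fix a lift $\tilde x_0$ of the basepoint and set $p := \mathrm{dev}(\tilde x_0)$. Let $S \subset \R^2$ be the image under $\mathrm{dev}$ of the preimages of $\Sigma$ in the metric completion. The flow from $x_0$ in direction $\xi$ avoids singularities up to time $t$ if and only if the segment $\{p + s\xi : s \in (0,t]\}$ is disjoint from $S$.

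Next I would establish the key finiteness claim: $S \cap \overline{B(p,t)}$ is a finite set. Each point $q$ of this intersection corresponds to a geodesic segment of length $|q-p| \leq t$ joining $x_0$ to some $\sigma_i$. Since $Q$ is a compact translation surface of finite area with finitely many conical singularities, a standard volume-packing argument (the same one that bounds the number of saddle connections of length $\leq t$) yields finiteness: thin tubes around distinct such geodesic arcs are essentially disjoint and each has area of order $\e t$ for a suitable small $\e$, so their number is $O(\mathrm{area}(Q)/(\e t))$.

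Each of the finitely many points $q \in S \cap \overline{B(p,t)} \setminus \{p\}$ rules out exactly the direction $\xi(q) := (q-p)/|q-p| \in \sph^1$, and hence the collection of bad directions is finite. Consequently its complement in $\sph^1$ has full measure and in particular is non-empty; choosing $\xi_0$ in this complement gives a flow starting at $x_0$ in direction $\xi_0$ which encounters no singularity up to time $t$. The main conceptual obstacle is the finiteness step, which would fail in infinite volume; what makes it work is exactly the compactness (finite area) of the translation surface $Q$ together with the finiteness of $\Sigma$.
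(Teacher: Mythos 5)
The paper does not give its own proof of this proposition; it simply quotes it from Zemljakov--Katok \cite{ZK75}, so there is nothing in the text to compare against. Assessed on its own terms, your outline has the right shape (show the ``bad'' directions cannot exhaust $\sph^1$) and the logical reduction to the developed picture is fine: if the segment $\{p+s\xi : s\in(0,t]\}$ misses $S$ then the flow in direction $\xi$ misses $\Sigma$. But there is a genuine gap at your ``key finiteness claim.''

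The set $S$ you define --- \emph{all} developed images, over the entire universal cover of $Q\setminus\Sigma$, of preimages of $\Sigma$ --- need not be discrete. The deck group acts on $\R^2$ through the holonomy representation, whose image is the group of relative periods of the abelian differential; for genus $\geq 2$ this is a finitely generated subgroup of $\R^2$ of rank $\geq 4$ and is generically \emph{dense}. Since $S$ is a union of finitely many holonomy-orbits, it is then dense in $\R^2$, and $S\cap\overline{B(p,t)}$ is infinite. Moreover, the step ``each $q\in S\cap\overline{B(p,t)}$ corresponds to a geodesic segment from $x_0$ to some $\sigma_i$'' is false: because $\mathrm{dev}$ is not injective, the straight segment $[p,q]$ need not develop along a path in $\tilde Q$ starting at $\tilde x_0$, so most such $q$ do not come from any geodesic on $Q$. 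Your subsequent packing argument counts geodesic \emph{arcs} on $Q$ rather than points of $S$ --- a different and correct target --- but it is only sketched, and the ``essentially disjoint tubes'' claim is a real lemma (tubes around arcs from a common basepoint $x_0$ overlap near $x_0$ when directions are close, and on a compact surface an arc may return near itself, so near-disjointness must be argued, e.g.\ by an angle/area bookkeeping \`a la Masur or a Gauss--Bonnet ``no thin geodesic bigon'' argument).

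For the statement actually asserted, you do not need finiteness at all, and the repair is short. The holonomy group is finitely generated (it is a quotient of $\pi_1(Q\setminus\Sigma)$, a finitely generated free group), hence countable; since $S$ is a finite union of its orbits, $S$ is countable. Each $q\in S\setminus\{p\}$ excludes exactly one direction $(q-p)/|q-p|$, so the set of excluded directions is countable, hence has uncountable complement in $\sph^1$; any $\xi_0$ in the complement works. This replaces the (false) discreteness/finiteness claim and the packing argument with the correct, elementary observation, and gives the proposition directly.
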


Note also that any straight line flow on $Q$ can be embedded into the plane as a straight line. Once the flow hits a boundary of $Q$, say, the edge $s_i$, instead of translating the flow according the (corresponding) vector $\tau_i$, one can translate $Q$ in the direction $-\tau_i$, and now consider the flow in this translation copy of $Q$. Repeating this process yields a straight line on the Euclidean plane. For more information about translation surfaces, see for example \cite{Z06}.

\section{Gaussian quasimodes on $\R^2$} \label{sec-quasi-euclid}

\subsection{Construction on $\R^2$}

We introduce a Gaussian initial state on the Euclidean plane in the following way. For $(x_0,\xi_0)\in\R^2\times\sph^1$ we denote
\begin{align} \label{eq-gauss-state}
\varphi_0(x) = \sqrt{\frac{\pi}{\h}}\gamma(\frac{x-x_0}{\h^{1/2}})e^{i \frac{\xi_0\cdot x}{\h}}, 
\end{align}
where $\gamma(x) = \frac{1}{2\pi}e^{-|x|^2/2}$ is the standard Gaussian function. Note that $\|\varphi_0\|_{L^2(\R^2)} = 1$ and a simple calculation confirms that the state $\varphi_0$ is localized in position near $x_0$ on a scale $\h^{1/2}$ and in momentum near $\xi_0/\h$ on a scale $\h^{-1/2}$. Clearly, without lose of generality, we may assume that $x_0 = 0$.

The evolved state of $\varphi_0$ is given by $\U_t \varphi_0$, where $\U_t = e^{it\Delta}$ and $\Delta$ is the usual Euclidean Laplacian on $\R^2$. And, we construct a quasimode by averaging the evolved state over time, that is
\begin{align} \label{eq-quasi-euclid}
\Phi_\lambda(x) = \int_{\R} H(t) e^{it\lambda} \U_t \varphi_0(x)dt, 
\end{align}
where $H(t) = \widetilde H(t/T)$, $\widetilde H \in C_c^\infty(\R)$ with $\supp \widetilde H \subset [-1,1]$, and $T>0$ is a time-scale which depends on $\h$.

\subsection{Spectral width of $\Phi_\lambda$}

We compute the spectral width \eqref{eq-spectral-width} of $\Phi_\lambda$. The fact that $\U_t$ is a unitary operator yields 
\begin{align*}
\|\Phi_\lambda\|_{L^2(\R^2)}^2 & = \inner{ \int_\R H(t) e^{it\lambda} \U_t \varphi_0(x) dt}{ \int_\R H(s) e^{is\lambda} \U_s \varphi_0(x) ds} \\
& = \int_{\R} \int_{\R} H(t) H(s) \inner{\varphi_0}{e^{i(s-t)\lambda} \U_{s-t} \varphi_0} dt ds,
\end{align*}
and a simple change of variables $v = s-t$, $u = s + t$ gives
\begin{align} \label{eq-L2-int}
\nonumber \|\Phi_\lambda\|_{L^2(\R^2)}^2 & = \frac12\int_{\R} \left[ \int_{\R} H(\frac{u-v}{2}) H(\frac{u + v}{2}) du \right] \inner{\varphi_0}{e^{iv\lambda} \U_v \varphi_0} dv \\
& = \frac12 \int_{\R} g(v) \inner{\varphi_0}{e^{iv(\Delta + \lambda)} \varphi_0} dv,
\end{align}
where $g(v) = \int_{\R} H(\frac{u-v}{2}) H(\frac{u + v}{2}) du$ is an even function in $v$. Note that 
$$
g(v) = \int_{\R} \widetilde H(\frac{u-v}{2T}) \widetilde H(\frac{u + v}{2T}) du = T \widetilde g (v/T), 
$$
with $\widetilde g (v) = \int_{\R} \widetilde H(\frac{u-v}{2}) \widetilde H(\frac{u + v}{2}) du$.

The next lemma provides the proper estimate for \eqref{eq-L2-int}.

\begin{lem} \label{lem-L2-int}
Let $T\in\R$ and let $\widetilde g \in C_c^\infty(\R)$ be an even function with support $[-2,2]$. Denote $g(v) = T \widetilde g(v/T)$. We have
\begin{align*}
& \int_{\R} g(v) \inner{\varphi_0}{e^{iv(\Delta + \lambda)} \varphi_0} dv = \frac{\h T}{4} \Bigg[ \sum_{\ell = 0}^N a_\ell (-1)^\ell (\h/T)^{2\ell} J_{2\ell} (2/\h) \\
& \qquad + \frac{\widetilde g^{(2N + 2)}(\xi)}{(2N + 2)!} (\h/T)^{2N + 2} J_{2N + 2} (2/\h) \Bigg] + \bigO(\h^\infty), 
\end{align*}
where the $a_\ell$'s are the Taylor coefficients of $\widetilde g$, and
$$
J_\ell(\frac{2}{\h}) = \int_{0}^{2\pi} q_\ell (\cos\theta) e^{-\frac{2}{\h}(1-\cos \theta)} d\theta = \bigT(\h^{1/2}), 
$$
where $q_\ell (x)$ is a monic polynomial in $x$ of degree $\ell$ where the coefficient of the $k^{th}$ monomial of $q_\ell$ is a polynomial in $\h$ of degree $\ell-k$ and $q_\ell (0) = (-1)^\ell$. The polynomials $q_\ell$'s do not depend on $\widetilde g$.
\end{lem}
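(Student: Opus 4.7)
The plan is to pass to the momentum representation, where $e^{iv\Delta}$ acts as the Fourier multiplier $e^{-iv|k|^2}$, collapsing the $v$-integral into the Fourier transform of $g$ evaluated at $|k|^2-\lambda$. A direct Gaussian computation gives $\hat\varphi_0(k) = \tfrac{1}{2}\sqrt{\h/\pi}\exp(-\h|k-\xi_0/\h|^2/2)$, so $|\hat\varphi_0(k)|^2 = \tfrac{\h}{4\pi}e^{-\h|k-\xi_0/\h|^2}$ is Gaussian around $\xi_0/\h$ with width $\h^{-1/2}$. Plancherel and the Fourier multiplier property of $e^{iv\Delta}$, together with an interchange of integrations (justified by compact support of $g$), yield
\[
\int_\R g(v)\inner{\varphi_0}{e^{iv(\Delta+\lambda)}\varphi_0}dv = \int_{\R^2}|\hat\varphi_0(k)|^2 G(\lambda-|k|^2)dk,
\]
where $G(\omega) := \int g(v)e^{i\omega v}dv$ is Schwartz (since $g\in C_c^\infty$) and even.

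Next I pass to polar coordinates $k = r(\cos\theta,\sin\theta)$, set $\xi_0 = (\cos\theta_0,\sin\theta_0)$, $\phi = \theta-\theta_0$, and use $\lambda = 1/\h^2$. The radial substitution $r = 1/\h+s$ yields the clean decomposition $\h|k-\xi_0/\h|^2 = (2/\h)(1-\cos\phi) + 2s(1-\cos\phi) + \h s^2$ and $\lambda-|k|^2 = -2s/\h-s^2$, which factors out the decisive weight $e^{-(2/\h)(1-\cos\phi)}$. Rescaling $\tau = 2s/\h$ and extending the lower endpoint from $-2/\h^2$ to $-\infty$ (at a cost of $\bigO(\h^\infty)$ by the Schwartz decay of $G$),
\[
\int|\hat\varphi_0|^2 G(\lambda-|k|^2)dk = \tfrac{\h}{8\pi}\int_0^{2\pi}e^{-(2/\h)(1-\cos\phi)}\mathcal{I}(\phi,\h)\,d\phi + \bigO(\h^\infty),
\]
with $\mathcal{I}(\phi,\h) := \int_\R(1+\h^2\tau/2)\,e^{-\h\tau(1-\cos\phi)-\h^3\tau^2/4}\,G(-\tau-\h^2\tau^2/4)\,d\tau$.

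I then Taylor-expand $e^{-\h\tau(1-\cos\phi)}$ in $\tau$ to order $2N+1$ — the Taylor remainder will produce the stated $\tilde g^{(2N+2)}(\xi)/(2N+2)!$ error term — and simultaneously expand $1+\h^2\tau/2$, $e^{-\h^3\tau^2/4}$, and the composed argument in $G$ as polynomials in $\h^2\tau^j$. Every resulting monomial reduces by Fourier inversion to $\int_\R \tau^k G(-\tau)d\tau = 2\pi(-i)^k g^{(k)}(0)$, which vanishes for odd $k$ (as $\tilde g$ is even) and equals $2\pi(-1)^\ell(2\ell)!\,T^{1-2\ell}a_\ell$ for $k=2\ell$. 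Collecting by powers of $a_\ell(-1)^\ell(\h/T)^{2\ell}$ assembles a polynomial in $\cos\phi$ of degree at most $2\ell$ with $\h$-polynomial coefficients: this is exactly the $q_{2\ell}$ of the statement, monic in $\cos\phi$ with constant term $(-1)^{2\ell}=1$ and coefficient of $x^k$ a polynomial in $\h$ of degree $2\ell-k$. The size estimate $J_\ell = \bigT(\h^{1/2})$ then follows by Laplace's method on $\int_0^{2\pi}q_\ell(\cos\theta)e^{-(2/\h)(1-\cos\theta)}d\theta$ around the critical point $\theta=0$, where the weight becomes Gaussian of width $\h^{1/2}$.

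The main obstacle is the combinatorial bookkeeping in the previous step: the three $\h^2$-corrections (from the prefactor, the Gaussian $e^{-\h^3\tau^2/4}$, and the shift in the argument of $G$) must be tracked simultaneously and shown to organize cleanly into the polynomials $q_{2\ell}$ of the claimed triangular structure. A subtle point is that these corrections, while naively smaller by positive powers of $\h$, contribute at the same effective order as the nominal leading piece after integration against the sharp Laplace weight on its critical scale $\phi\sim\h^{1/2}$ (where $1-\cos\phi\sim\h$); hence the lower-degree pieces of $q_\ell$ genuinely matter for recovering the $\bigT(\h^{1/2})$ sharp size.
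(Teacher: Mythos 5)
Your plan and the paper's proof pass through the same two pivots — transform to momentum space and reduce the angular integral to a Laplace-type weight $e^{-(2/\h)(1-\cos\phi)}$ — but they organize the radial/time analysis differently. The paper keeps $v$ as the integration variable, writes $\inner{\varphi_0}{e^{iv(\Delta+\lambda)}\varphi_0} = \tfrac{\sqrt{2\pi}\,\h}{8\pi}e^{-iv\lambda}\widehat F(-v)$ with $F(\rho) = \int_0^{2\pi}e^{-(\h\rho+1/\h-2\rho^{1/2}\cos\theta)}d\theta$, Taylor-expands $\widetilde g$ with the Lagrange remainder, and converts $v$-moments into exact $\rho$-derivatives $F^{(2\ell)}(\lambda)$; the polynomial $q_\ell(\cos\theta)$ then emerges in a single step as the $\ell$-th logarithmic derivative of $f(\rho,\theta)$ at $\rho=\lambda$, which is what makes the monicity, the $q_\ell(0)=(-1)^\ell$ normalization, and the triangular $\h$-degree structure transparent. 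You instead pass to $G=\widehat g$ immediately, perform the radial shift $r=1/\h+s$ and the rescaling $\tau=2s/\h$, and recover the same polynomials by multiplying out Taylor expansions of three separate factors and reassembling. This works but costs you the clean packaging: the $q_\ell$'s no longer drop out of a single differentiation identity, and as you yourself note the combinatorial reassembly is the unresolved core of your argument, whereas in the paper it is essentially a one-line Leibniz computation.

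One concrete inaccuracy to fix: the remainder $\frac{\widetilde g^{(2N+2)}(\xi)}{(2N+2)!}$ in the Lemma is the Lagrange remainder of $\widetilde g$ itself, obtained (as the paper does) by writing $\widetilde g = P_N + R_N$ before doing anything to the phase. Truncating the Taylor series of $e^{-\h\tau(1-\cos\phi)}$ at order $2N+1$, as you propose, does \emph{not} produce this expression — its remainder is an integral of $\tau^{2N+2}e^{-\theta\h\tau(1-\cos\phi)}$ against $G(-\tau)$, which is of the correct size $(\h/T)^{2N+2}\bigT(\h^{1/2})$ but does not naturally take the form $\widetilde g^{(2N+2)}(\xi)$ evaluated at a single $\xi\in(-2,2)$. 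If you want to land on the Lemma's literal statement rather than just an $\bigO$-equivalent bound, you should keep one Taylor expansion (that of $\widetilde g$, before Fourier transforming in $v$) and recognize the resulting $v$-moments as derivatives of $F$, exactly as the paper does; otherwise explicitly state that you only recover the remainder up to an equivalent bound. Your closing observation that the lower-degree, $\h$-weighted pieces of $q_\ell$ contribute at the same order as the monic leading term after the $\phi\sim\h^{1/2}$ Laplace localization is correct and worth keeping — it is precisely why the Lemma records the full triangular polynomial structure rather than only the top coefficient.
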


\begin{proof}
First, let us study the inner product $\inner{\varphi_0}{e^{iv(\Delta + \lambda)} \varphi_0}$. Note that the initial state $\varphi_0$ (see \eqref{eq-gauss-state}) can also be written as follows ($x_0 = 0$)
$$
\varphi_0(x) = \frac{1}{2\sqrt{\pi\h}} \int_{\R^2} \widehat\gamma(k) e^{ix\cdot (\frac{k}{\h^{1/2}} + \frac{\xi_0}{\h})} dk, 
$$
where $\widehat \gamma(k) = \gamma(k)$ is the inverse Fourier transform. Thus, 
\begin{align*}
e^{iv\Delta} \varphi_0 & = \frac{1}{2\sqrt{\pi\h}} \int_{\R^2} \widehat\gamma(k) e^{iv\Delta} e^{ix\cdot (\frac{k}{\h^{1/2}} + \frac{\xi_0}{\h})} dk \\
& = \frac{1}{2\sqrt{\pi\h}} \int_{\R^2} \widehat\gamma(k) e^{-iv|\frac{k}{\h^{1/2}} + \frac{\xi_0}{\h}|^2} e^{ix\cdot (\frac{k}{\h^{1/2}} + \frac{\xi_0}{\h})} dk, 
\end{align*}
and thus by applying the Fourier transform and its inverse on $\gamma$, and a change of variable with respect to $x$, we get
\begin{align*}
& \inner{\varphi_0}{e^{iv(\Delta + \lambda)} \varphi_0} = \\
& = \frac{1}{4\pi\h} \inner{\int_{\R^2} \widehat\gamma(k) e^{i x \cdot (\frac{k}{\h^{1/2}} + \frac{\xi_0}{\h})} dk}{e^{iv\lambda} \int_{\R^2} \widehat\gamma(k') e^{-iv|\frac{k'}{\h^{1/2}} + \frac{\xi_0}{\h}|^2} e^{ix\cdot (\frac{k'}{\h^{1/2}} + \frac{\xi_0}{\h})} dk'} \\
& = \frac{1}{2\h} \int_{\R^2} e^{iv \left[|\frac{k}{\h^{1/2}} + \frac{\xi_0}{\h}|^2 - \lambda\right]} \widehat\gamma(k') \int_{\R^2} e^{- i x \cdot \frac{k'}{\h^{1/2}}} \left[\frac{1}{2\pi}\int_{\R^2} \widehat\gamma(k) e^{i x \cdot \frac{k}{\h^{1/2}}} dk \right] dx dk' \\
& = \pi \int_{\R^2} e^{iv \left[|\frac{k}{\h^{1/2}} + \frac{\xi_0}{\h}|^2 - \lambda\right]} \widehat\gamma(k') \left[\frac{1}{2\pi}\int_{\R^2} \gamma(x) e^{- i x \cdot k'} dx \right] dk' \\
& = \pi \int_{\R^2} \widehat\gamma(k)^2 e^{iv \left[|\frac{k}{\h^{1/2}} + \frac{\xi_0}{\h}|^2 - \lambda\right]} dk. 
\end{align*}

Recall that $\widehat \gamma (k) = \frac{1}{2\pi} e^{-|k|^2/2}$, $\lambda = \h^{-2}$ and denote by $\theta = \angle(r,\xi_0)$ the angle in the coordinate system $(\xi_0,\xi_0^\perp)$. We have
\begin{align*}
 \int_{\R^2} & \widehat\gamma(k)^2 e^{iv \left[|\frac{k}{\h^{1/2}} + \frac{\xi_0}{\h}|^2 - \lambda\right]} dk = \h \int_{\R^2} \left[\widehat\gamma(\h^{1/2}(k-\frac{\xi_0}{\h}))\right]^2 e^{iv(|k|^2 - \lambda)} dk \\
& = \frac{\h}{(2\pi)^2} \int_{0}^{2\pi} \int_{0}^{\infty} e^{-(\h r^2 + \frac{1}{\h}-2 r \cos \theta)} e^{iv(r^2- \lambda)} r dr d\theta \\
& = \frac{\h}{2(2\pi)^2} \int_{0}^{\infty} \int_{0}^{2\pi} e^{-(\h\rho + \frac{1}{\h}-2\rho^{1/2} \cos \theta)} 
d\theta e^{iv(\rho- \lambda)} d\rho \\
& = \frac{\h}{8\pi^2} \int_{0}^{\infty} F(\rho) e^{iv(\rho- \lambda)} d\rho, 
\end{align*}
where $F(\rho) = \int_{0}^{2\pi} f(\rho,\theta) d\theta$ and $f(\rho,\theta) = e^{-(\h\rho + \frac{1}{\h}-2\rho^{1/2} \cos \theta)}$. Thus, we conclude
\begin{align*}
& \inner{\varphi_0}{e^{iv(\Delta + \lambda)} \varphi_0} = \frac{\h}{8\pi} \int_0^\infty F(\rho) e^{iv(\rho- \lambda)} d\rho = \frac{\sqrt{2\pi}\h}{8\pi} e^{-iv\lambda} \widehat F(-v). 
\end{align*}

Therefore, we get
\begin{align*}
& \int_{\R} g(v) \inner{\varphi_0}{e^{iv(\Delta + \lambda)} \varphi_0} dv = \frac{\sqrt{2\pi} \h T}{8\pi} \int_{-2T}^{2T} \widetilde g(v/T) e^{-iv\lambda} \widehat F(-v) dv. 
\end{align*}

Let $P_N(v)$ be the degree $(2N + 2)$ Taylor polynomial for $\widetilde g$ at $0$. By Taylor's theorem for any $v\in(-2,2)$ we have
$$
\widetilde g(v) = P_N(v) + R_N(v), 
$$
where $R_N(v) = \frac{\widetilde g^{(2N + 2)}(\xi)}{(2N + 2)!}v^{2N + 2}$ for $\xi \in (-2,2)$. Thus, 
\begin{align*}
\int_{-2T}^{2T} & \widetilde g(v/T) e^{-iv\lambda} \widehat F(-v) dv = \int_{-2T}^{2T} \left( P_N(v/T) + R_N(v/T) \right) e^{-iv\lambda} \widehat F(-v) dv \\
& = \int_{\R} \left( P_N(v/T) + R_N(v/T) \right) e^{-iv\lambda} \widehat F(-v) dv \\
& \qquad - \int_{|v| \ge 2T} \left( P_N(v/T) + R_N(v/T) \right) e^{-iv\lambda} \widehat F(-v) dv. 
\end{align*}

The second integral is of order $\bigO(\h^\infty)$, because of the rapid decay of $\widehat{h}$ for $|v| \ge 2T$, this indeed can be seen by Egorov's theorem (or by the Gaussian propagation given in the appendix). The first integral can be written as follows
\begin{align*}
 \int_{\R} & \left( P_N(v/T) + R_N(v/T) \right) e^{-iv\lambda} \widehat F(-v) dv = \sum_{\ell = 0}^N a_\ell \int_{\R} (v/T)^{2\ell} e^{-iv\lambda} \widehat F(-v) dv \\
& \quad + \frac{\widetilde g^{(2N + 2)}(\xi)}{(2N + 2)!} \int_{\R} (v/T)^{2N + 2} e^{-iv\lambda} \widehat F(-v) dv, 
\end{align*}
where the $a_\ell$'s are the Taylor's coefficients of $P_N$ and $\xi\in(-2,2)$. Let us now evaluate the integral 
\begin{align*}
\int_{\R} & v^{2\ell} e^{-iv\lambda} \widehat F(-v) dv = \int_{\R} v^{2\ell} e^{iv\lambda} \widehat F(v) dv \\
& = i^{-2\ell}\frac{d^{2\ell}}{d\rho^{2\ell}} \left\{\int_{\R}\widehat{h}(v)e^{iv\rho}dv\right\}\Big|_{\rho = \lambda} = (-1)^\ell \sqrt{2\pi} F^{(2\ell)}(\lambda). 
\end{align*}

Now, 
\begin{align*}
F^{(\ell)} (\lambda) = \int_{0}^{2\pi}\frac{\partial^\ell}{\partial \rho^\ell} f(\rho,\theta) \Big|_{\rho = \lambda} d\theta = \int_{0}^{2\pi} \left(e^{-(\h\rho + \frac{1}{\h}-2\rho^{1/2} \cos \theta)} \right)^{(\ell)} \Big|_{\rho = \lambda} d\theta,
\end{align*}
and the $\ell^{th}$ derivative of $f(\rho,\theta)$ at $\rho = \lambda$ is
\begin{align*}
 \left(e^{-(\h\rho + \frac{1}{\h}-2\rho^{1/2} \cos \theta)} \right)^{(\ell)} \Big|_{\rho = \lambda} = \h^\ell q_\ell (\cos\theta) e^{-\frac{2}{\h}(1-\cos \theta)}, 
\end{align*}
where $q_\ell (x)$ is a monic polynomial in $x$ of degree $\ell$ where the coefficient of the $k^{th}$ monomial of $q_\ell$ is a polynomial in $\h$ of degree $\ell-k$ and $q_\ell(0) = (-1)^\ell$. Therefore, we get 
\begin{align*}
F^{(\ell)}(\lambda) = \h^\ell \int_{0}^{2\pi} q_\ell (\cos\theta) e^{-\frac{2}{\h}(1-\cos \theta)} d\theta = : \h^\ell J_\ell(\frac{2}{\h}). 
\end{align*}

Note that for $x\gg1$
\begin{align*}
\int_{0}^{2\pi} \cos(n \theta) e^{x\cos \theta} d\theta = 2\pi I_n(x) \sim \sqrt{\frac{2\pi}{x}} e^x, 
\end{align*}
where $I_n(x)$ is the modified Bessel function of the first kind, and thus
\begin{align*}
e^{-2/\h} \int_{0}^{2\pi} \cos(n \theta) e^{\frac{2}{\h} \cos \theta} d\theta \sim \sqrt{\pi\h}, 
\end{align*}
which implies that $J_\ell(2/\h) = \bigT(\h^{1/2})$ (for any $\ell \le N$). Thus, we get
\begin{align*}
& \int_{\R} v^{2\ell} e^{-iv\lambda} \widehat F(-v) dv = (-1)^\ell \sqrt{2\pi} \h^{2\ell} J_{2\ell} (2/\h). 
\end{align*}

Finally, combining the calculations above, we get
\begin{align*}
\int_{\R} & g(v) \inner{\varphi_0}{e^{iv(\Delta + \lambda)} \varphi_0} dv = \frac{\sqrt{2\pi} \h T}{8\pi} \Bigg[ \sum_{\ell = 0}^N a_\ell \int_{\R} (v/T)^{2\ell} e^{-iv\lambda} \widehat F(-v) dv \\
& \qquad + \frac{\widetilde g^{(2N + 2)}(\xi)}{(2N + 2)!} \int_{\R} (v/T)^{2N + 2} e^{-iv\lambda} \widehat F(-v) dv \Bigg] + \bigO(\h^\infty) 
\end{align*}
which we may be rewritten as
\begin{align*}
\int_{\R} & g(v) \inner{\varphi_0}{e^{iv(\Delta + \lambda)} \varphi_0} dv = \frac{\h T}{4} \Bigg[ \sum_{\ell = 0}^N a_\ell (-1)^\ell (\h/T)^{2\ell} J_{2\ell} (2/\h) \\
& \qquad + \frac{\widetilde g^{(2N + 2)}(\xi)}{(2N + 2)!} (\h/T)^{2N + 2} J_{2N + 2} (2/\h) \Bigg] + \bigO(\h^\infty). 
\end{align*}
\end{proof}

As we shall explain in Section \ref{sub-dyn} (see \eqref{eq-T}), we have $T<\h$. This constraint comes from our dynamical assumptions, and hence, the main contribution, in Lemma \ref{lem-L2-int}, comes from the first Taylor coefficient $a_0 = \widetilde g(0) = 2\|\widetilde H\|_{L^2(\R^2)}^2$ of $\widetilde g$, and thus, we have
$$
\|\Phi_\lambda\|_{L^2(\R^2)}^2 = \bigT_{\widetilde H} \left( T \h^{3/2}\right). 
$$

By integration by parts, $(\Delta + \lambda)\Phi_\lambda$ can be written as follows
\begin{align} \label{delta + lambda}
\nonumber & (\Delta + \lambda)\Phi_\lambda = (\Delta + \lambda) \int_{\R} H(t) e^{it\lambda)} \U_t \varphi_0(x)dt \\
\nonumber & = \int_{\R} H(t) (\Delta + \lambda) e^{it(\Delta + \lambda)} \varphi_0(x)dt = -i \int_{\R} H(t) \frac{d}{dt} e^{it(\Delta + \lambda)} \varphi_0(x)dt \\
& = i \int_{\R} H'(t) e^{it(\Delta + \lambda)} \varphi_0(x)dt.
\end{align}

So, in view of \eqref{eq-quasi-euclid}, it is essentially of the same form as $\Phi_\lambda$ but with $H'(t)$ instead of $H(t)$. Therefore, 
$$
\|(\Delta + \lambda)\Phi_\lambda\|_{L^2(\R^2)}^2 = \frac12 \int_{\R} g(v) \inner{\varphi_0}{e^{iv(\Delta + \lambda)} \varphi_0} dv,
$$
where 
\begin{align*}
& g(v) = \int_{\R} H'(\frac{u-v}{2}) H'(\frac{u + v}{2}) du \\
& = \frac{1}{T^2} \int_{\R} \widetilde H'(\frac{u-v}{2T}) \widetilde H'(\frac{u + v}{2T}) du = \frac{1}{T} \widetilde g (v/T), 
\end{align*}
and $\widetilde g (v) = \int_{\R} \widetilde H'(\frac{u-v}{2}) \widetilde H'(\frac{u + v}{2}) du$. Its first Taylor coefficient is $a_0 = \widetilde g(0) = 2\|\widetilde H'\|_{L^2(\R^2)}^2$, and thus, in a similar manner, by Lemma \ref{lem-L2-int} we get
\begin{align*}
\|(\Delta + \lambda)\Phi_\lambda\|_{L^2(\R^2)}^2 = \bigT_{\widetilde H} \left( \frac{\h^{3/2}}{T}\right). 
\end{align*}

Finally, we conclude 
\begin{cor} \label{cor-spectral-euclid}
The spectral width of $\Phi_\lambda$ is 
\begin{align*}
\frac{\|(\Delta + \lambda)\Phi_\lambda\|_{L^2(\R^2)}}{\|\Phi_\lambda\|_{L^2(\R^2)}} = \bigT(T^{-1}).\end{align*}
\end{cor}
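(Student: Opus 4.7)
The corollary is essentially a direct combination of the two asymptotic estimates established immediately before its statement, so the plan is very short: one just needs to take a ratio and a square root, after justifying that each estimate is a legitimate $\Theta$, not merely an $O$.

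The plan is to exploit the observation (formula \eqref{delta + lambda}) that integration by parts rewrites $(\Delta+\lambda)\Phi_\lambda$ in exactly the same form as $\Phi_\lambda$, but with $H$ replaced by $H'$. In particular, $\|(\Delta+\lambda)\Phi_\lambda\|_{L^2(\R^2)}^2$ can be expressed as $\tfrac{1}{2}\int_\R g(v)\langle\varphi_0,e^{iv(\Delta+\lambda)}\varphi_0\rangle\,dv$ with the new $g$ obtained from $\widetilde H'$ in place of $\widetilde H$, and satisfying $g(v)=\tfrac{1}{T}\widetilde g(v/T)$ (note the factor $\tfrac{1}{T}$, in contrast with the factor $T$ for the denominator). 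Thus Lemma \ref{lem-L2-int} applies to both quantities, yielding a Taylor expansion whose $\ell$-th term scales like $(\hbar/T)^{2\ell}J_{2\ell}(2/\hbar)$ with $J_{2\ell}(2/\hbar)=\Theta(\hbar^{1/2})$.

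Next, I would invoke the dynamical constraint on $T$ (so that $\hbar/T$ is small and the $\ell=0$ Taylor coefficient dominates) to extract the leading order. Since $a_0=\widetilde g(0)=2\|\widetilde H\|_{L^2(\R^2)}^2>0$ for the denominator, and $a_0=2\|\widetilde H'\|_{L^2(\R^2)}^2>0$ for the numerator, both leading coefficients are strictly positive, which upgrades the $\bigO$ estimates to genuine $\bigT$ estimates:
\begin{align*}
\|\Phi_\lambda\|_{L^2(\R^2)}^2 &= \bigT_{\widetilde H}\!\left(T\hbar^{3/2}\right),\\
\|(\Delta+\lambda)\Phi_\lambda\|_{L^2(\R^2)}^2 &= \bigT_{\widetilde H}\!\left(\hbar^{3/2}/T\right).
\end{align*}
Taking square roots and dividing then gives
\[
\frac{\|(\Delta+\lambda)\Phi_\lambda\|_{L^2(\R^2)}}{\|\Phi_\lambda\|_{L^2(\R^2)}}=\frac{\bigT(\hbar^{3/4}T^{-1/2})}{\bigT(\hbar^{3/4}T^{1/2})}=\bigT(T^{-1}),
\]
which is the claim.

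The only point that requires any care is verifying that the higher Taylor terms and the remainder $R_N$ in Lemma \ref{lem-L2-int} are genuinely negligible, so that the leading-order constants (and hence the $\bigT$ lower bound) are not accidentally cancelled. This is handled by choosing $N$ large enough: the remainder contributes at most $(\hbar/T)^{2N+2}\hbar^{1/2}$, which under the scaling of $T$ is arbitrarily small compared to the $\ell=0$ term $\hbar^{1/2}$. I do not expect any serious obstacle here; the content of the corollary is essentially bookkeeping on top of Lemma \ref{lem-L2-int}.
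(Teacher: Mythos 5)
Your proposal is correct and follows essentially the same route as the paper: integration by parts via \eqref{delta + lambda} to reduce $\|(\Delta+\lambda)\Phi_\lambda\|^2$ to the same bilinear form with $\widetilde H'$ in place of $\widetilde H$ (picking up the $1/T$ rather than $T$ prefactor), Lemma~\ref{lem-L2-int} applied to each, and the positivity of the respective leading Taylor coefficients $a_0 = 2\|\widetilde H\|_{L^2}^2$ and $a_0=2\|\widetilde H'\|_{L^2}^2$ to upgrade to a genuine $\bigT$-estimate before taking the ratio. One small remark: you correctly identify that the dominance of the $\ell=0$ term requires $\hbar/T$ to be small, which is indeed the operative condition; the paper's sentence ``we have $T<\hbar$'' is evidently a slip (with $T=\bigO(\hbar^{3/4+2\e})$ and $\e$ small one has $T\gg\hbar$, which is precisely what makes $(\hbar/T)^{2\ell}$ decrease in $\ell$), so your reading of the constraint is the intended one.
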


\section{Quasimodes on translation surfaces} \label{sec-quasi-trans}

\subsection{Construction on translation surfaces}

The construction of our quasimodes on translation surfaces is based on the Gaussian initial state \eqref{eq-gauss-state} that we have introduced on the Euclidean plane. We take a cutoff of this state, and then average the evolved state over time.

Let us now consider a translation surface $Q$ and take $x_0\in Q$. Let $\chi\in C^\infty_c(\R_ + )$ be a cutoff function so that $\chi(x) = 1$ for $x \le \tfrac{1}{2}$ and $\chi(x) = 0$ for $x \ge 1$. For $\h\ll1$ and $\e>0$, we may construct $\psi_0\in C^\infty(Q)$ so that
\begin{align} \label{eq-cutoff-state}
\psi_0(x) = \chi(\frac{|x-x_0|}{\h^{1/2-\e}})\varphi_0(x)
\end{align}
in local Euclidean coordinates. The evolved state of $\varphi_0$ is given by $
\U_t^Q \varphi_0$, where $\U_t^Q$ is defined with respect to the Laplace-Beltrami operator on the translation surface $Q$, that is, a compact Riemannian manifold with periodic boundary. We construct a quasimode for the Laplacian on $Q$, by averaging the evolved state over time, that is
\begin{align} \label{eq-quasi-trans}
\Lambda_\lambda = \int_\R H(t)e^{i\lambda t}\U_t^Q \psi_0 dt,
\end{align}
where $H(t) = \widetilde H(t/T)$, $\widetilde H \in C_c^\infty(\R)$ with $\supp \widetilde H \subset [-1,1]$, and $T>0$ is a time-scale which depends on $\h$. 

\subsection{Dynamical assumptions} \label{sub-dyn}

Recall the definition of the Weyl quantization on $\R^2$. Let $\psi\in\sph(\R^2)$ and $a = a(x,\xi)\in C^\infty_c(\R^2\times\R^2)$. We define (see for instance \cite{Zw12})
$$
[\Op(a)\psi](x) = \frac{1}{(2\pi \h)^2} \int_{\R^2} \int_{\R^2} e^{\frac{i}{\h}(x-y)\cdot\xi}a(\frac{x + y}{2},\xi)\psi(y)dy d\xi.
$$

If we choose an observable, which only depends on the position variable $a = a(x)$ we obtain the restriction of the Wigner distribution to position space 
$$
d\mu_{\psi}(y) = \|\psi\|_{L^2(\R^2)}^{-2}|\psi(y)|^2dy.
$$

On the other hand, if $a(x,\xi) = a(\xi)$, depends only on $\xi$, we find
\begin{align*}
[\Op(a) \psi](x) = \frac{1}{(2\pi \h)^2} \int_{\R^2} \int_{\R^2} e^{\frac{i}{\h}(x-y)\cdot \xi} a(\xi) \psi(y) dy d\xi, 
\end{align*}
and thus
\begin{align} \label{op-inner}
\nonumber & \inner{\Op(a) \psi}{\psi} = \int_{\R^2} \left[ \frac{1}{(2\pi \h)^2} \int_{\R^2} \int_{\R^2} e^{\frac{i}{\h}(x-y)\cdot \xi} a(\xi) \psi(y) dy d\xi \right] \overline{\psi(x)}dx \\
\nonumber & = \frac{1}{(2\pi \h)^2} \int_{\R^2} a(\xi) \int_{\R^2} \overline{\psi(x)} e^{\frac{i}{\h}x\cdot \xi} \left[\int_{\R^2} e^{-\frac{i}{\h}y\cdot \xi} \psi(y) dy \right]dx d\xi \\
\nonumber & = \frac{1}{2\pi \h^2} \int_{\R^2} a(\xi) \widehat \psi(\xi/\h) \int_{\R^2} \overline{\psi(x)} e^{\frac{i}{\h}x\cdot \xi} dx d\xi \\
& = \frac{1}{\h^2} \int_{\R^2} a(\xi) \widehat \psi(\xi/\h) \overline{\widehat \psi(\xi/\h)} d\xi = \frac{1}{\h^2} \int_{\R^2} a(\xi) |\widehat \psi(\xi/\h)|^2 d\xi,
\end{align}
which implies that the restriction of the Wigner distribution to momentum space is given by 
$$
d\mu_{\psi}(\xi) = \h^{-2}\|\psi\|_{L^2(\R^2)}^{-2}|\widehat{\psi}(\xi/\h)|^2.
$$

Now, consider the initial state $\psi_0$ as given in \eqref{eq-cutoff-state}. This implies that $d\mu_{\psi_0}(\xi)$ must be localized near $\xi_0$ on a scale of size $\h^{1/2}$, and similarly, that the restriction of the Wigner distribution to position space is localized near $x_0$ on a scale $\h^{1/2}$. It follows that almost all of the mass (expect for a proportion of order $\h^\infty$) of the Wigner distribution associated with the state $\psi_0$ is concentrated inside the set 
\begin{align} \label{eq-Omega}
\Omega_0 = B(x_0,\h^{1/2-\e}) \times B(\xi_0,\h^{1/2-\e}) \subset TQ.
\end{align}

If we now choose an initial direction $\xi_0$ corresponding to an embedded metric cylinder $\cZ_{\xi_0}$ (a family of parallel periodic orbits) of length $L$ and width $\asymp 1/L$, 
then for 
\begin{align} \label{eq-T}
|v| \le T = \bigO(\h^{3/4 + 2\e})
\end{align}
we have that $\phi_{v/\h}\Omega_0 \subset \cZ_{\xi_0}$, where $\Phi_\lambda$ denotes the geodesic flow on $Q$. 

Indeed, this follows from the observation that 
$$
\diam(\pi_Q(\phi_{v/\h}\Omega_0))\asymp \frac{|v|}{\h}\diam(\pi_{\sph^1}\Omega_0) = |v|\h^{-1/2-\e}
$$
where $\pi_X$ denotes the canonical projection on $X$. In order to remain inside the cylinder we must have $\diam(\pi_Q(\phi_{v/\h}\Omega_0))$ is smaller than the width, that is, $|v|\h^{-1/2-\e} = \bigO(\frac{1}{L})$. Moreover, we impose the condition $\frac{|v|}{\h} \le L$ which ensures that our wave packet may not travel further than the length of the periodic cylinder. This then implies $|v|\h^{-1/2-\e} = \bigO(\frac{\h}{|v|})$ and thus $|v| = \bigO(\h^{3/4 + \e/2})$. In particular, the set $\bigcup_{|v| \le T} \phi_{v/\h}\Omega_0$ does not self-intersect on $Q$.

We may lift the metric cylinder $\cZ_{\xi_0}$ to the Euclidean plane by embedding it in a Euclidean cylinder $\widetilde{\cZ}_{\xi_0}\subset \bigsqcup_i Q_i\subset \R^2$ which lies inside a union of disjoint translates $Q_i = \tau_i Q$ where $\tau_i$ is the corresponding translation vector (see \eqref{eq-trans-vec}).

%

We may, therefore, apply the exact version of Egorov's theorem for the Weyl quantization on the Euclidean plane (cf. \cite[Ch.~4]{Ma02}), since $\supp a_0 \circ \phi_{-v/\h} \subset \phi_{v/\h}\Omega_0$ 
$$
\U_v\Op(a_0)\psi_0 = \Op(a_0\circ \phi_{-v/\h})\U_v \psi_0.
$$

This means that the propagation of the cutoff state $\Op(a_0)\psi_0$ may simply be lifted to the cover $\bigsqcup_i Q_i$ and then projected back to $Q$. In other words, $\supp \psi_0$ and $\supp a_0\circ \phi_{-v/\h}$ may trivially overlap, by construction, only around $v = 0$.

\subsection{Spectral width of $\Lambda_\lambda$}

Using Egorov's theorem we calculate the spectral width $\Lambda_\lambda$ and show that it is comparable to the Euclidean one, that is, the quantity given by Corollary \ref{cor-spectral-euclid}, up to a $\bigO(\h^\infty)$-error.

Repeating the same argument of \eqref{eq-L2-int}, we have 
\begin{align} \label{eq-L2-Lambda}
& \|\Lambda_\lambda\|_{L^2(Q)}^2 = \frac12 \int_{\R} g(v) e^{-iv\lambda} \inner{\psi_0}{\U_v^Q \psi_0}_Q dv,
\end{align}
where $g(v) = \int_{\R} H(\frac{u-v}{2}) H(\frac{u + v}{2}) du$. We may 
choose an observable $a_0\in C^\infty(TQ)$ so that $\supp a_0 \subset \Omega_0$ and 
\begin{align} \label{eq-obv}
\int_{TQ} a(x,\xi)^n d\mu_{\psi_0}(x,\xi) = 1 + \bigO(\h^\infty) \,, \quad n = 1,2. 
\end{align}

Moreover, our dynamical assumptions (see Section \ref{sub-dyn}) guarantee that for any $v\in[-2T,2T]$ we have 
$$
\bigcup_{v\in[-2T,2T]}\supp(a_v)|_Q\cap \partial \cZ_{\xi_0,T} = \emptyset, 
$$
where $\cZ_{\xi_0,T} = \bigcup_{j = -M_T}^{N_T}\tau_j Q$, and $\tau_0 = \Id$. We may now decompose our initial state as follows 
$$
\psi_0 = \Op(a_0)\psi_0 + (1-\Op(a_0))\psi_0, 
$$
where $\Op(a_0)\psi_0$ captures all of the mass of $\psi_0$ but $\bigO(\h^\infty)$ and will be propagated inside $\cZ_{\xi_0,T}$, without intersecting the boundary, so that we may apply the exact 
version of Egorov for the Weyl quantization on $\R^2$. 

We split the integral in \eqref{eq-L2-Lambda} into two parts
\begin{align} \label{eq-split}
\nonumber & \int_{\R} g(v) e^{-iv\lambda} \inner{\psi_0}{\U_v^Q \psi_0}_Q dv = \int_{\R} g(v) e^{-iv\lambda} \inner{\psi_0}{\U_v^Q \Op(a_0)\psi_0}_Q dv \\
& \qquad + \int_{\R} g(v) e^{-iv\lambda} \inner{\psi_0}{\U_v^Q (\Id-\Op(a_0))\psi_0}_Q dv. 
\end{align}

The second term is of order $\bigO(\h^\infty)$. Indeed, by Cauchy-Schwarz and $\|\psi_0\|_{L^2(Q)} \le 1$, we have
\begin{align*}
& |\inner{\psi_0}{\U_v^Q (\Id-\Op(a_0))\psi_0}_Q|^2 \le \|\psi_0\|_{L^2(Q)}^2 \cdot \|\U_v^Q (\Id-\Op(a_0))\psi_0\|_{L^2(Q)}^2 \\
& \qquad \le \inner{(\Id-\Op(a_0))\psi_0}{(\Id-\Op(a_0))\psi_0}_Q. 
\end{align*}

And, since $\Id-\Op(a_v)$ is a self-adjoint operator, we obtain
\begin{align*}
& |\inner{\psi_0}{(\Id-\Op(a_0))\psi_0}_Q|^2 \le \inner{(\Id-\Op(a_0))^2\U_v^Q \psi_0}{\psi_0}_Q \\
& = \|\psi_0\|^2_Q - 2 \inner{\Op(a_0) \psi_0}{\psi_0}_Q + \inner{\Op(a_0^2) \psi_0}{\psi_0}_Q \\
& = 1 + \bigO(\h^\infty) - 2\int_{TQ} a(x,\xi) d\mu_{\psi_0}(x,\xi) + \int_{TQ} a(x,\xi)^2 d\mu_{\psi_0}(x,\xi), 
\end{align*}
where we used the property of the Weyl quantization $\Op(a_0)^2 = \Op(a_0^2)$. It follows
\begin{align*}
\inner{\psi_0}{\U_v^Q (\Id-\Op(a_0))\psi_0}_Q = \bigO(\h^\infty). 
\end{align*}

Therefore, we obtain
\begin{align*}
\int_{\R} g(v) e^{-iv\lambda} \inner{\psi_0}{(\Id-\Op(a_v))\U_v^Q \psi_0}_Q dv = \bigO(\h^\infty). 
\end{align*}

The leading term in \eqref{eq-split} can be written as a sum corresponding to the classical trajectory on $\cZ_{\xi_0,T} \subset \R^2$, translated back to the fundamental domain $Q$. Due to our choice of the observable $a$, the evolution operator $\U_v^Q$ doesn't see the boundary and thus locally can be identified with the evolution operator on $\R^2$.
\begin{align*}
& \int_{\R} g(v) e^{-iv\lambda} \inner{\psi_0}{\U_v^Q \Op(a_0)\psi_0(x)}_Q dv \\
& = \int_{\R} g(v) e^{-iv\lambda} \int_Q \psi_0(x)\sum_{j = -M_T}^{N_T} \overline{\U_v \Op(a_0) \psi_0(\tau_j^{-1}x)} dx dv \\
& = \int_{\R} g(v) e^{-iv\lambda} \int_Q \psi_0(x)\sum_{j = -M_T}^{N_T} \overline{\Op(a_v)\U_v \psi_0(\tau_j^{-1}x)} dx dv \\
& = \sum_{j = -M_T}^{N_T} \int_{\R} g(v) e^{-iv\lambda} \int_Q \psi_0(x) \overline{\Op(a_v)\U_v \psi_0(\tau_j^{-1}x)} dx dv. 
\end{align*}

By our dynamical assumptions for any $j\not = 0$ we have
$$
\inner{\psi_0}{\Op(a_v)\U_v \psi_0(\tau^{-1}x)}_Q = 0, 
$$
as their supports are disjoint. Recall that $\tau_0 = \Id$ and thus we have
\begin{align*}
& \int_{\R} g(v) e^{-iv\lambda} \inner{\psi_0}{\U_v^Q \Op(a_0)\psi_0(x)}_Q dv \\
& = \int_{\R} g(v) e^{-iv\lambda} \int_Q \psi_0(x)\overline{\Op(a_v)\U_v \psi_0(x)} dx dv. 
\end{align*}

Denote $\rho_0 = \varphi_0 - \psi_0$. Then, we have
\begin{align*}
\|\rho_0\|_{L^2(\R^2)}^2 
& = \int_{\R^2} |(1-\chi(\frac{|x|}{\h^{1/2-\e}})) \varphi_0(x)|^2 dx \\
& = \int_{\R^2} |(1-\chi(\frac{|x|}{\h^{1/2-\e}})) \sqrt\frac{\pi}{\h} \gamma(\frac{x}{\h^{1/2}}) e^{\frac{i\xi_0 \cdot x}{\h}} |^2 dx \\
& = \frac{\pi}{\h} \int_{\R^2} (1-\chi(\frac{|x|}{\h^{1/2-\e}}))^2 \gamma(\frac{x}{\h^{1/2}})^2 dx \\
& \le \frac{\pi}{\h} \int_{\R^2 \setminus B(0,\frac12\h^{1/2-\e})} \gamma(\frac{x}{\h^{1/2}})^2 dx = \bigO(e^{-\h^{-\e}}),
\end{align*}
where the last inequality follows from the Gaussian concentration inequality. Thus, we get
\begin{align*}
\int_Q & \psi_0(x) \overline{\Op(a_v)\U_v \psi_0(x)} dx = \int_{\R^2} \psi_0(x) \overline{\Op(a_v)\U_v \psi_0(x)} dx \\
& = \int_{\R^2} (\varphi_0 - \rho_0)(x) \overline{\Op(a_v)\U_v (\varphi_0 - \rho_0)(x)} dx \\
& = \inner{\varphi_0}{\Op(a_v)\U_v \varphi_0} - \inner{\varphi_0}{\rho_0} - \inner{\rho_0}{\Op(a_v)\U_v \varphi_0} + \inner{\rho_0}{\rho_0}. 
\end{align*}

Note that the last three terms are bounded in terms of $\rho_0$. Thus, we reduced the problem to evolution over $\R^2$ and its usual inner product with respect to $\varphi_0$, that is
\begin{align*}
& \inner{\psi_0}{\U_v^Q\Op(a_0) \psi_0}_Q = \inner{\varphi_0}{\U_v \Op(a_0)\varphi_0} + \bigO(e^{-h^{-\e}}). 
\end{align*}

The leading term can be split again as follows
\begin{align*}
\inner{\varphi_0}{\Op(a_v)\U_v \varphi_0} = \inner{\varphi_0}{\U_v \varphi_0} + \inner{\varphi_0}{(\Id- \Op(a_v))\U_v \varphi_0}. 
\end{align*}

The first term is known and given by Corollary \ref{cor-spectral-euclid}, as we shall see shortly. The second term we may estimate as above
\begin{align*}
& |\inner{\varphi_0}{\U_v (\Id-\Op(a_0))\varphi_0}|^2 \\
& \le 1 - 2\int_{T\R^2} a(x,\xi) d\mu_{\varphi_0}(x,\xi) + \int_{T\R^2} a(x,\xi)^2 d\mu_{\varphi_0}(x,\xi). 
\end{align*}

By our choice of the observable $a$ in \eqref{eq-obv}, we conclude 
\begin{align*}
\inner{\psi_0}{\U_v (\Id-\Op(a_0))\varphi_0} = \bigO(\h^\infty).
\end{align*}

Combining all the above, we see that the inner product in \eqref{eq-L2-Lambda} can be written as 
\begin{align*}
\inner{\psi_0}{\U_v^Q \psi_0}_Q = \inner{\varphi_0}{\U_v \varphi_0} + \bigO(\h^\infty),
\end{align*}
which, by Corollary \ref{cor-spectral-euclid}, yields the following connection between the $L^2(Q)$-norm of $\Lambda_\lambda$ and the $L^2(\R^2)$-norm of $\Phi_\lambda$:
\begin{align*}
\|\Lambda_\lambda\|_{L^2(Q)}^2 
& = \frac12 \int_{\R} g(v) e^{-iv\lambda} \inner{\varphi_0}{\U_v \varphi_0} dv + \bigO(\h^\infty) \\
& = \|\Phi_\lambda\|_{L^2(\R^2)}^2 + \bigO(\h^\infty) = \bigT_{\widetilde H} \left( T \h^{3/2}\right). 
\end{align*}

Now, we proceed with $\|(\Delta + \lambda)\Lambda_\lambda\|_{L^2(\R^2)}^2$ as we did in \eqref{delta + lambda}. Hence, we have 
\begin{align*}
(\Delta + \lambda)\Lambda_\lambda = i \int_{\R} H'(t) e^{it\lambda} \U_t \psi_0(x)dt.
\end{align*}

Thus, applying Corollary \ref{cor-spectral-euclid} and verbally repeating the above argument, we conclude
\begin{align*}
\|(\Delta + \lambda)\Lambda_\lambda\|_{L^2(\R^2)}^2 = \|(\Delta + \lambda)\Phi_\lambda\|_{L^2(\R^2)}^2 + \bigO(\h^\infty) = \bigT_{\widetilde H} \left( \frac{\h^{3/2}}{T}\right). 
\end{align*}

Finally, we obtain
\begin{cor}
The spectral width of $\Lambda_\lambda$ is 
$$
\frac{\|(\Delta + \lambda)\Lambda_\lambda\|_{L^2(Q)}}{\|\Lambda_\lambda\|_{L^2(Q)}} = \bigO(T^{-1}) = \bigO(\lambda^{3/8+\e}),
$$
in view of \eqref{eq-T}.
\end{cor}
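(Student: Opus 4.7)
The plan is to combine the two $L^2$-estimates that were established immediately above the statement of the corollary with the time-scale constraint from the dynamical assumptions in Section \ref{sub-dyn}. Specifically, the preceding calculation yields
\begin{align*}
\|\Lambda_\lambda\|_{L^2(Q)}^2 = \bigT_{\widetilde H}\!\left(T \h^{3/2}\right), \qquad \|(\Delta+\lambda)\Lambda_\lambda\|_{L^2(Q)}^2 = \bigT_{\widetilde H}\!\left(\h^{3/2}/T\right),
\end{align*}
both with a multiplicative error of order $\bigO(\h^\infty)$ coming from the reduction to the Euclidean computation on $\R^2$. Taking the ratio, the $\h^{3/2}$ factor cancels and the explicit constants depending on $\widetilde H$ and $\widetilde H'$ collapse into a single $\bigO$-constant, giving
\begin{align*}
\frac{\|(\Delta+\lambda)\Lambda_\lambda\|_{L^2(Q)}^2}{\|\Lambda_\lambda\|_{L^2(Q)}^2} = \bigO(T^{-2}),
\end{align*}
so that taking square roots produces the claimed $\bigO(T^{-1})$.

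It then remains to convert the $\h$-dependent bound into a $\lambda$-dependent one. By the semiclassical convention $\lambda = \h^{-2}$, the constraint $T = \bigO(\h^{3/4+2\e})$ in \eqref{eq-T} translates to
\begin{align*}
T^{-1} = \bigO\!\left(\h^{-3/4-2\e}\right) = \bigO\!\left(\lambda^{3/8+\e}\right),
\end{align*}
after relabelling $2\e$ as $\e$, which is admissible since $\e>0$ is arbitrary. Plugging this in yields the stated spectral width $\bigO(\lambda^{3/8+\e})$.

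There is no substantive obstacle at this stage: all the work has been front-loaded into the $L^2$-estimates above, whose derivation relied on the exact Egorov theorem applied on the Euclidean cover $\bigsqcup_i Q_i$, the cutoff reduction $\psi_0 = \Op(a_0)\psi_0 + \bigO(\h^\infty)$, and Corollary \ref{cor-spectral-euclid}. The only minor point worth flagging is ensuring that the $\bigO(\h^\infty)$ error in the denominator does not swamp the leading $\bigT_{\widetilde H}(T\h^{3/2})$ term; this is immediate because $T\h^{3/2}\gg \h^{N}$ for every $N$, given that $T\ge \h^{3/4+2\e}$ satisfies $T\h^{3/2} \ge \h^{9/4+2\e}$, which dominates $\h^\infty$. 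Hence the ratio is truly of order $T^{-1}$, and the corollary follows.
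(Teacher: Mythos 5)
Your proof is correct and follows the same route as the paper: take the two $\bigT_{\widetilde H}$-estimates for $\|\Lambda_\lambda\|_{L^2(Q)}^2$ and $\|(\Delta+\lambda)\Lambda_\lambda\|_{L^2(Q)}^2$ established just above, form the ratio so the $\h^{3/2}$ cancels, and substitute the time-scale constraint \eqref{eq-T} together with $\lambda=\h^{-2}$. One small wording slip: the $\bigO(\h^\infty)$ error is additive, not multiplicative, but you already address the only issue this raises by noting $T\h^{3/2}\gg\h^N$; also no relabelling of $\e$ is actually needed, since $(3/4+2\e)/2 = 3/8+\e$ exactly.
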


\section{Concluding proofs} \label{sec-proofs}

\subsection{Proof of Theorem \ref{thm-Q}} \label{sec-thm-Q}

We begin by studying the inverse Fourier transform of the Gaussian quasimode given in \eqref{eq-quasi-euclid}. 

\begin{lem} \label{hat-phi}
We have
\begin{align*}
\widehat \Phi_\lambda(\xi/\h) = \h \int_{\R} H(t) e^{\frac{it}{\h^2}(1-|\xi|^2)} \widehat\gamma(\frac{\xi-\xi_0}{\h^{1/2}}) dt. 
\end{align*}
\end{lem}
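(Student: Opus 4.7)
The plan is direct: compute $\widehat{\Phi}_\lambda$ by pulling the Fourier transform inside the $t$-integral and reducing to a Fourier transform of the Gaussian coherent state. Fubini applies because $H$ is compactly supported in $t$ and $\U_t\varphi_0 \in \mathcal{S}(\R^2)$ uniformly on the relevant compact $t$-interval. So I get
$$\widehat{\Phi}_\lambda(k) = \int_\R H(t)\, e^{it\lambda}\, \widehat{\U_t\varphi_0}(k)\, dt,$$
and the remaining task is to compute $\widehat{\U_t\varphi_0}(k)$ at $k = \xi/\h$ in closed form.

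First I would use that $\U_t = e^{it\Delta}$ is the Fourier multiplier $e^{-it|k|^2}$, so $\widehat{\U_t\varphi_0}(k) = e^{-it|k|^2}\widehat{\varphi_0}(k)$. Then I would compute $\widehat{\varphi_0}$ from the definition \eqref{eq-gauss-state} (with $x_0 = 0$) via two standard Fourier identities: the dilation rule $\widehat{f(\cdot/\h^{1/2})}(k) = \h\,\widehat{f}(\h^{1/2}k)$ (in the $1/(2\pi)$ convention used in the paper) applied to $\gamma(\cdot/\h^{1/2})$, together with the frequency-translation rule $\widehat{e^{i\xi_0\cdot x/\h} f}(k) = \widehat{f}(k - \xi_0/\h)$ to absorb the modulation. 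This produces an expression proportional to $\widehat{\gamma}\bigl(\h^{1/2}k - \h^{-1/2}\xi_0\bigr)$ with a prefactor depending on $\h$ coming from the normalization constant $\sqrt{\pi/\h}$ and the dilation Jacobian.

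Next I would specialize to $k = \xi/\h$. The argument of $\widehat{\gamma}$ simplifies as $\h^{1/2}(\xi/\h) - \h^{-1/2}\xi_0 = (\xi - \xi_0)/\h^{1/2}$, matching the stated form, while the propagator phase becomes $e^{-it|\xi|^2/\h^2}$. Combining with the factor $e^{it\lambda}$ from \eqref{eq-quasi-euclid} and using $\lambda = \h^{-2}$ yields the single phase $e^{it(1-|\xi|^2)/\h^2}$, and the $t$-integral then gives exactly the right-hand side of the lemma.

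I do not anticipate a genuine obstacle: everything reduces to one Fourier-multiplier identity for $\U_t$ and one Gaussian-coherent-state Fourier computation. The only place to be careful is in tracking the $\h$-dependent constants (the $\sqrt{\pi/\h}$ in the normalization of $\varphi_0$ and the $\h$ from the Jacobian of the $\h^{1/2}$-dilation), which combine to give the overall prefactor stated in the lemma.
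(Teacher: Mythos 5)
Your approach matches the paper's: both read $\widehat{\Phi}_\lambda$ off from the fact that $\U_t$ acts as the Fourier multiplier $e^{-it|k|^2}$, combined with the Fourier transform of the coherent state $\varphi_0$ obtained from the dilation and modulation rules, followed by the substitution $k=\xi/\h$ and $\lambda=\h^{-2}$. One caution on the step you flag as needing care: if you track the prefactor honestly you will find $\sqrt{\pi\h}$ rather than the bare $\h$ written in the lemma --- the paper's own proof silently drops the normalization $\tfrac{1}{2\sqrt{\pi\h}}$ in the Fourier representation of $\varphi_0$ and a $2\pi$ from Fourier inversion --- but this constant is immaterial to the way the lemma is used (only the localization of $\widehat{\Phi}_\lambda(\xi/\h)$ near $\xi_0$ and ratios of norms matter).
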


\begin{proof}
Indeed, 
\begin{align*}
\Phi_\lambda(x) 
& = \int_{\R} H(t) e^{it\lambda} \U_t \varphi_0(x)dt \\
& = \int_{\R} H(t) e^{it\lambda} \int_{\R^2} \widehat\gamma(k) e^{it\Delta} e^{ix\cdot (\frac{k}{\h^{1/2}} + \frac{\xi_0}{\h})} dk dt \\
& = \h \int_{\R} H(t) e^{it\lambda} \int_{\R^2} \widehat\gamma(\h^{1/2}(k-\frac{\xi_0}{\h})) e^{it\Delta} e^{ix\cdot k} dk dt \\
& = \h \int_{\R} H(t) e^{it\lambda} \int_{\R^2} \widehat\gamma(\h^{1/2}(k-\frac{\xi_0}{\h})) e^{-it|k|^2} e^{ix\cdot k} dk dt \\
& = \int_{\R^2} \left[ \h \int_{\R} H(t) e^{it(\lambda-|k|^2)} \widehat\gamma(\h^{1/2}(k-\frac{\xi_0}{\h})) dt \right] e^{ix\cdot k} dk,
\end{align*}
which implies $\widehat \Phi_\lambda(k) = \h \int_{\R} H(t) e^{it(\lambda-|k|^2)} \widehat\gamma(\h^{1/2}(k-\frac{\xi_0}{\h})) dt$. Note that the ess support of $\widehat \Phi_\lambda(\xi/\h)$ is centered in $B(\xi_0,\h^{1/2-\e})$.
\end{proof}

The next proposition shows that the Wigner distribution associated with the normalized quasimode $\Phi_\lambda/\|\Phi_\lambda\|_{L^2(\R^2)}$ converges weakly to a Dirac mass located at $\xi_0$, which is evident on the Euclidean plane.

\begin{prp} \label{op-euclid}
We have
\begin{align*}
\inner{\Op(a) \Phi_\lambda}{\Phi_\lambda} & = \bigO_a(e^{-h^{-\e}}) \|\widehat \Phi_\lambda\|_{L^2(\R^2)}^2 + a(\xi_0) \|\widehat \Phi_\lambda\|_{L^2(\R^2)}^2 \\
& + \bigO_a(\h^{1/2-\e}) \|\widehat \Phi_\lambda\|_{L^2(\R^2)}^2. 
\end{align*}

Moreover,
\begin{align*}
& \lim_{\h\to 0} \frac{\inner{\Op(a) \Phi_\lambda}{\Phi_\lambda}}{\|\Phi_\lambda\|_{L^2(\R^2)}^2} = a(\xi_0) = \int_{\R^2} a(\xi) \delta(\xi-\xi_0) d\xi.
\end{align*}
\end{prp}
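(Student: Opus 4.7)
The plan is to combine the momentum-space identity \eqref{op-inner},
\[
\inner{\Op(a)\Phi_\lambda}{\Phi_\lambda} = \frac{1}{\h^2}\int_{\R^2} a(\xi)\,|\widehat\Phi_\lambda(\xi/\h)|^2\,d\xi,
\]
with the factorisation supplied by Lemma \ref{hat-phi}. The key observation is that the Gaussian weight $\widehat\gamma((\xi-\xi_0)/\h^{1/2})$ there is independent of $t$, so it pulls out of the time integral and we get the pointwise bound
\[
|\widehat\Phi_\lambda(\xi/\h)|^2 \le \h^2 \,|\widehat\gamma(\tfrac{\xi-\xi_0}{\h^{1/2}})|^2 \cdot \bigl(T\|\widetilde H\|_{L^1}\bigr)^2,
\]
so $|\widehat\Phi_\lambda(\xi/\h)|^2$ is essentially localised in $B(\xi_0,\h^{1/2-\e})$ up to an exponentially small Gaussian tail.

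I would then split the integral over $\R^2$ into $B := B(\xi_0,\h^{1/2-\e})$ and its complement $B^c$. On $B^c$, the Gaussian concentration inequality yields a bound of order $e^{-\h^{-2\e}}$ (times polynomial factors in $\h$ and $T$); after dividing by $\|\widehat\Phi_\lambda\|_{L^2}^2 = \bigT_{\widetilde H}(T\h^{3/2})$ this produces the first $O_a(e^{-\h^{-\e}})$ term. On $B$, I Taylor-expand
\[
a(\xi) = a(\xi_0) + R_a(\xi), \qquad |R_a(\xi)| \le \|\nabla a\|_\infty \, |\xi-\xi_0| = O_a(\h^{1/2-\e}).
\]
Pulling out the constant $a(\xi_0)$ gives the main contribution $a(\xi_0)\cdot \h^{-2}\int_B |\widehat\Phi_\lambda(\xi/\h)|^2 \,d\xi$, and extending the domain of integration back from $B$ to $\R^2$ costs only another exponentially small error. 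Gathering the three pieces gives exactly the announced decomposition.

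For the limit, Plancherel's identity (with the paper's symmetric Fourier convention) gives $\|\widehat\Phi_\lambda\|_{L^2(\R^2)}^2 = \|\Phi_\lambda\|_{L^2(\R^2)}^2$, so dividing the decomposition by $\|\Phi_\lambda\|_{L^2(\R^2)}^2$ isolates $a(\xi_0)$, both error terms being $o(1)$ as $\h\to 0$. The integral representation $a(\xi_0) = \int a(\xi)\delta(\xi-\xi_0)\,d\xi$ is then immediate.

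I do not anticipate a serious obstacle: the statement is essentially a quantitative expression of the fact that $\widehat\Phi_\lambda$ is, up to the oscillatory time factor, a rescaled Gaussian centred at $\xi_0$ of width $\h^{1/2}$. The only point that requires mild care is that the $t$-integral factor $|\int H(t)e^{it(1-|\xi|^2)/\h^2}\,dt|$ must be handled uniformly in $\xi$, which is achieved by the trivial $L^1$ bound $\le T\|\widetilde H\|_{L^1}$, and that the remainder in the Taylor expansion of $a$ remains $o(1)$ on $B$, which is where the presence of the parameter $\e>0$ is essential.
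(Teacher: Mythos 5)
Your proposal follows essentially the same route as the paper: invoke the momentum-space formula \eqref{op-inner} for $\inner{\Op(a)\Phi_\lambda}{\Phi_\lambda}$, observe via Lemma \ref{hat-phi} that $\widehat\Phi_\lambda(\xi/\h)$ carries the $t$-independent Gaussian factor $\widehat\gamma((\xi-\xi_0)/\h^{1/2})$, split the $\xi$-integral over $B(\xi_0,\h^{1/2-\e})$ and its complement, bound the tail by Gaussian decay, and Taylor-expand $a$ on the ball to isolate $a(\xi_0)$ plus an $O_a(\h^{1/2-\e})$ remainder. The only cosmetic difference is that you spell out the pointwise $L^1$-in-$t$ bound and cite Plancherel explicitly to pass from $\|\widehat\Phi_\lambda\|_{L^2}$ to $\|\Phi_\lambda\|_{L^2}$, both of which the paper leaves implicit; the argument is correct.
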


\begin{proof}
Note that $\ess \widehat \Phi_\lambda(\xi/\h) \subset B(\xi_0,\h^{1/2-\e}) = : B$. By \eqref{op-inner} we have
\begin{align*}
& \inner{\Op(a) \Phi_\lambda}{\Phi_\lambda} = \frac{1}{\h^2} \int_{\R^2} a(\xi) |\widehat \Phi_\lambda(\xi/\h)|^2 d\xi \\
& = \frac{1}{\h^2} \int_{\R^2 \setminus B} a(\xi) |\widehat \Phi_\lambda(\xi/\h)|^2 d\xi + \frac{1}{\h^2} \int_{B} a(\xi) |\widehat \Phi_\lambda(\xi/\h)|^2 d\xi \\
& = \frac{1}{\h^2} \int_{\R^2 \setminus B} a(\xi) |\widehat \Phi_\lambda(\xi/\h)|^2 d\xi + \frac{1}{\h^2} \int_{B} \left[ a(\xi_0) + \nabla a(\eta_c)\cdot (\xi-\xi_0) \right] |\widehat \Phi_\lambda(\xi/\h)|^2 d\xi,
\end{align*}
where $\eta_c \in B(\xi_0,\h^{1/2-\e})$. For the first integral we have $|\xi-\xi_0| \ge \h^{1/2-\e}$, and thus by Lemma \ref{hat-phi} we get
\begin{align*}
\frac{1}{\h^2} \int_{\R^2 \setminus B} a(\xi) |\widehat \Phi_\lambda(\xi/\h)|^2 d\xi = \bigO_a(e^{-h^{-\e}}) \|\widehat \Phi_\lambda\|_{L^2(\R^2)}^2. 
\end{align*}

For the second we have
\begin{align*}
\frac{1}{\h^2} \int_{B} a(\xi_0) |\widehat \Phi_\lambda(\xi/\h)|^2 d\xi = a(\xi_0) \|\widehat \Phi_\lambda\|_{L^2(\R^2)}^2. 
\end{align*}

For the last one we have
\begin{align*}
 \left|\frac{1}{\h^2} \int_{B} \nabla a(\eta_c)\cdot (\xi-\xi_0) |\widehat \Phi_\lambda(\xi/\h)|^2 d\xi \right| = \bigO_a(\h^{1/2-\e}) \|\widehat \Phi_\lambda\|_{L^2(\R^2)}^2. 
\end{align*}

Altogether, we get
\begin{align*}
\inner{\Op(a) \Phi_\lambda}{\Phi_\lambda} & = \bigO_a(e^{-h^{-\e}}) \|\widehat \Phi_\lambda\|_{L^2(\R^2)}^2 + a(\xi_0) \|\widehat \Phi_\lambda\|_{L^2(\R^2)}^2 \\
& + \bigO_a(\h^{1/2-\e}) \|\widehat \Phi_\lambda\|_{L^2(\R^2)}^2,
\end{align*}
and thus, $\lim_{\h\to 0} \frac{\inner{\Op(a) \Phi_\lambda}{\Phi_\lambda}}{\|\widehat\Phi_\lambda\|_{L^2(\R^2)}^2} = a(\xi_0)$.
\end{proof}

Now, by our previous calculations, for any $x\in Q$ we have
$$
\Lambda_\lambda(x) = \int_\R H(t)e^{it\lambda}\U_t^Q\psi_0(x)dt = \sum_{j = -M_T}^{N_T}\int_\R H(t)e^{it\lambda}\U_t\phi_0(\tau_j^{-1}x) + \bigO(\h^\infty). 
$$

Let $a\in C_c^\infty (TQ)$, and $a = a(\xi)$. We may therefore approximate the matrix element on $Q$ by the matrix element on $\R^2$, where we introduce an error of order $\bigO(e^{-\h^{-\e}})$
$$
\inner{\Op(a)\Lambda_\lambda}{\Lambda_\lambda}_Q = \inner{\Op(a)\Phi_\lambda}{\Phi_\lambda} + \bigO(\h^\infty). 
$$

This implies, in view of Proposition \ref{op-euclid},
$$
\lim_{\h\to0}\frac{\inner{\Op(a)\Lambda_\lambda}{\Lambda_\lambda}_Q}{\|\Lambda_\lambda\|_{L^2(Q)}^2} = \lim_{\h\to0}\frac{\inner{\Op(a)\Phi_\lambda}{\Phi_\lambda}}{\|\Phi_\lambda\|_{L^2(\R^2)}^2} = a(\xi_0),
$$
which shows that the Wigner distribution associated with the normalized quasimode $\Lambda_\lambda/\|\Lambda_\lambda\|_{L^2(Q)}$ converges weakly to a Dirac mass located at $\xi_0$, which concludes the proof of Theorem \ref{thm-Q}.

\subsection{Proof of Corollary \ref{cor-P}} \label{sec-cor-P}

Let $P$ be a polygon with rational angles. Under the action of a finite group of reflections $G$ we may unfold $P$ to a translation surface 
$$
Q = \bigcup_{g\in G}gP. 
$$

The billiard flow on $P$ may then be lifted to the geodesic flow on the flat surface $M_Q$ that is obtained by gluing the parallel edges of $Q$. Given the quasimode $\Lambda_\lambda$ on $Q$ we may now construct a quasimode on $P$ by the method of images
$$
\Psi_\lambda(x) = \sum_{g\in G}\Lambda_\lambda(gx). 
$$

Let us first calculate the $L^2$-norm of $\Psi_\lambda$
\begin{align*}
& \|\Psi_\lambda\|_{L^2(P)}^2 = \|\sum_{g\in G}\Lambda_\lambda(gx)\|_{L^2(P)}^2 = \int_P \sum_{g,g'\in G}\Lambda_\lambda(gx)\overline{\Lambda_\lambda(g'x)} dx \\
& = \sum_{g\in G}\int_P |\Lambda_\lambda(gx)|^2 dx + \sum_{g\neq g'}\int_P\Lambda_\lambda(gx)\overline{\Lambda_\lambda(g'x)} dx 
= \int_Q |\Lambda_\lambda(x)|^2 dx + \bigO(\h^\infty).
\end{align*}

And, noting that $\Delta(\Lambda_\lambda\circ g) = (\Delta\Lambda_\lambda)\circ g$ (since $g$ is a composition of a translation and a rotation), we find
$$
\|(\Delta + \lambda)\Psi_\lambda\|_{L^2(P)} = \|(\Delta + \lambda)\Lambda_\lambda\|_{L^2(Q)} + \bigO(\h^\infty). 
$$

We thus find that $\Psi_\lambda$ is a quasimode of spectral width $\bigO(T^{-1})$, and we proceed to calculate the semiclassical measure
\begin{align*}
& \inner{\Op(a)}{\Psi_\lambda,\Psi_\lambda}_{L^2(P)} = \sum_{g,g'\in G}\inner{\Op(a)}{\Lambda_\lambda(gx),\Lambda_\lambda(g'x)}_{L^2(P)} \\
& = \sum_{g\in G}\inner{\Op(a)}{\Lambda_\lambda(gx),\Lambda_\lambda(gx)}_{L^2(P)} + \bigO(\h^\infty) 
\end{align*}
which equals
\begin{align*}
&\frac{\vol(P)}{\vol(Q)}\sum_{g\in G}\int_{\sph^1}a(\xi)d\mu_{\Lambda_\lambda}(g\xi) + \bigO(\h^\infty) \\
& \qquad = \frac{1}{|G|}\sum_{g\in G}\int_{\sph^1}a(\xi)d\mu_{\Lambda_\lambda}(g\xi) + \bigO(\h^\infty) \longrightarrow \frac{1}{|G|}\sum_{g\in G}a(g\xi_0), \quad \text{as}\;\lambda\to\infty,
\end{align*}
which concludes the proof of Corollary \ref{cor-P}.

\end{document}